\theoremstyle{plain}
\newtheorem{theorem}{Theorem}
\newtheorem*{theorem*}{Theorem}
\newtheorem{lemma}{Lemma}
\newtheorem{corollary}{Corollary}
\newtheorem*{corollary*}{Corollary}
\newtheorem{definition}{Definition}
\newtheorem*{remark*}{Remark}
\def\mathscr{\mathfrak}
\newcommand{\ds}{\displaystyle}
\def\R{{\mathbb R}}
\def\C{{\mathbb C}}
\def\Ker{\mathrm{Ker\,}}
\def\Ran{\mathrm{Ran\,}}
\def\llangle{\langle\!\langle}
\def\rrangle{\rangle\!\rangle}
\def\e{\varepsilon}
\def\s{\sigma}
\def\T{\mathcal{T}}
\def\L{\mathcal{L}}
\def\A{\mathcal{A}}
\def\B{\mathcal{B}}
\def\X{\mathcal{X}}
\def\F{\mathcal{F}}
\def\pfi{\varphi}
\def\D{\mathcal{D}}
\def\-{\backslash}
\def\ohat{\widehat{\otimes}}
\def\ds{\displaystyle}
\date{}
\begin{document}

\title{Positive semigroups and algebraic Riccati equations\\ in Banach spaces}

\author{Sergiy Koshkin\\
 Computer and Mathematical Sciences\\
 University of Houston-Downtown\\
 One Main Street, \#S705\\
 Houston, TX 77002\\
 e-mail: koshkins@uhd.edu}

\maketitle
\begin{abstract}\

We generalize Wonham's theorem on solvability of algebraic operator Riccati equations to Banach spaces, namely there is a unique stabilizing solution to $A^*P+PA-PBB^*P+C^*C=0$ when $(A,B)$ is exponentially stabilizable and $(C,A)$ is exponentially detectable. The proof is based on a new approach that treats the linear part of the equation as the generator of a positive semigroup on the space of symmetric operators from a Banach space to its dual, and the quadratic part as an order concave map. A direct analog of global Newton's iteration for concave functions is then used to approximate the solution, the approximations converge in the strong operator topology, and the convergence is monotone. The linearized equations are the well-known Lyapunov equations of the form $A^*P+PA=-Q$, and semigroup stability criterion in terms of them is also generalized.
\bigskip

\textbf{Keywords:} positive definite operator, positive semigroup, Lyapunov equation, Newton iteration, matrix convexity, 
projective tensor product, implemented semigroup, optimal control, detectability, stabilizability

\textbf{Mathematics Subject Classification:} 47D06, 93C25, 47B65, 49J27, 49K27, 34D20
\end{abstract}

\section{Introduction}\label{S1}

In this paper we generalize Wonham's classical theory of positive solvability for algebraic matrix Riccati equations to Banach spaces. It provides a constructive solution to the linear quadratic optimal control problem in Banach spaces that reduces to solving a sequence of linear operator equations \cite[12.3]{Wh}. A generalization to Hilbert spaces was given by Zabczyk in 1970s \cite{Zb0}, but for Banach spaces no general theory appears to exist despite the widespread use of the operator Riccati equations in the optimal control theory. For a Banach space $X$ the operators appearing in the Riccati equation are not symmetric positive definite operators on $X$, they map from $X$ to $X^*$, but there are suitable notions of symmetricity and positive definiteness for them. The iterative process used in our solution goes back to Kleinman, who used it for matrix Riccati equations \cite{Kl}, Wonham in \cite[12.3]{Wh} gave more general conditions under which it works. It consists of solving a sequence of Lyapunov equations of the form $A^*P+PA=-Q$, where $A$ is the generator of a $C_0$ semigroup on a Banach space $X$, and $Q: X\to X^*$ is symmetric and positive definite. 

Two major issues complicate the solution theory on Banach spaces. One is the absence of positive definite isomorphisms, like the identity operator on a Hilbert space, which allow non-degenerate approximations. These can be sidestepped however. The second issue is deeper. The Riccati equations involve operators on spaces of operators that lack regularity properties in the the strong and the weak operator topologies. It turns out that the suitable topology is the weak*, and explicitly or implicitly one has to work with Banach preduals to spaces of operators. For Hilbert spaces the predual can be identified with the space of trace class operators, but in the Banach case it is described as a projective tensor product. Tensor products of Banach spaces are rarely used and relatively little known in control theory applications, which helps explain why the operator Riccati equations are either considered in Hilbert spaces only \cite{Ben,Zb}, or in some special cases \cite{GvN,PvN,RM}.

We give a novel interpretation of infinite dimensional Riccati equations in terms of Lyapunov semigroups and their generators, see \cite{Damm} for a similar approach with matrices. As pointed out in \cite{K} and  \cite{K2} the theory of Lyapunov equations implicitly relies on the fact that the left hand side is the generator of a semigroup $\T(t)P:=T^*(t)PT(t)$ on the space of symmetric bounded operators $\B_s(X,X^*)$. We called it the Lyapunov semigroup, and one of its attractive properties is that it is positive, i.e. it preserves the positive definiteness of $P$. Unfortunately, this semigroup is not $C_0$ if $A$ is unbounded, even when $X$ is a Hilbert space, which obstructs its use for analytic reasons. However, in \cite{K2} we proved that it is always adjoint to a $C_0$ semigroup on a predual to $\B_s(X,X^*)$, i.e. $\T(t)$ is always a $C_0^*$ semigroup \cite[3.1]{BR}. This property allowed us to overcome the analytic obstructions, but the price for using it is the appearence of tensor products as preduals. We should note that the original motivation for studying $C_0^*$ semigroups came from automorphism flows on von Neumann algebras, a particular case of Lyapunov semigroups when $T(t)$ is unitary.

In this paper we extend an explicit description of the Lyapunov semigroup generators \cite{Fr} to non-reflexive Banach spaces, see Theorem \ref{Lyappi}, and use it to solve operator Riccati equations. In addition, we also generalize a stability criterion in terms of Lyapunov equations \cite{K2} to non-reflexive spaces, see Theorem \ref{Wonh}, the matrix case is due to Wonham \cite[12.4]{Wh}.

That Riccati equation can be written as $\A P+\Phi(P)=0$, where $\A$ generates a positive $C_0^*$ semigroup, is only half of the solution however. The other half comes from the fact that the quadratic part $\Phi(P)$ is Gateaux differentiable and concave in the sense of partial order on the space of operators, a generalization of matrix concavity \cite{Ando}. It explains why Kleinman's method led to monotone convergence of approximations, as shown in \cite{Damm} it amounts to an analog of Newton's iteration on the space of matrices.

Applications to control theory single out a particular class of solutions to the equation that translate into asymptotically stabilizing controls. It turns out that such stabilizing solutions $P$ can also be characterized abstractly, for them the formal derivative $\A+\Phi'(P)$ generates a stable semigroup. We found it more illuminating to be general and prove existence of stabilizing solutions for a class of general quasi-linear concave equations, Theorem \ref{NewtRicc}. Aside from admitting a more streamlined argument, the result is of interest in its own right because it may apply to other situations, e.g. to reaction-diffusion equations with concave non-linear parts \cite[4.2]{Leu}. After everything is said and done our final result can be stated (almost) identically to Wonham's \cite[12.3]{Wh}: if $A-BK$ and $A-LC$ generate exponentially stable semigroups for some $K$ and $L$, the algebraic Riccati equation $A^*P+PA-PBB^*P+C^*C=0$ has a unique positive definite stabilizing solution $P$, see Theorem \ref{WonhBan}. 

The paper is organized as follows. Section \ref{S4} serves as preliminaries, we introduce the main concepts of the paper and state our main results. In particular, the linear quadratic optimal control problem in a Banach space $X$ is recalled, and how it leads to the algebraic operator Riccati equation. Analytic difficulties related to its interpretation and solution are also discussed. Sections \ref{S5} and \ref{S3} lay the analytic groundwork for the proof which is completed in Section
\ref{S6}. In particular, Section \ref{S5} introduces projective tensor products, which are preduals to the spaces of bounded operators between Banach spaces, considers topologies in which the Lyapunov semigroups are continuous, characterizes their generators, investigates properties of positive definite symmetric operators, and continuity of quadratic maps on monotone sequences of operators. In Section \ref{S3} we prove an abstract result on existence of stabilizing solutions to quasilinear concave equations in ordered spaces, and in Section \ref{S6} we deduce  from it our generalization of the Wonham's theorem.

\section{Optimal control and operator Riccati equations}\label{S4}

This section provides motivation for the theory developed in the paper and states our main results. We start by describing the optimal control problem that leads to algebraic operator Riccati equations, then introduce the main concepts needed to interpret and solve them, and conclude with discussion and examples. 

The linear quadratic regulator problem with infinite horizon can be described as follows. Let $X$ be a real Banach space, called the state space, and $A$ be the generator of a $C_0$ semigroup on $X$ with domain $\D_A$. Let $U$ and $V$ be real Hilbert spaces, called the control (input) and the observation (output) space respectively, and let $B:U\to X$ and $C:X\to V$ be bounded linear operators. Consider a linear system written formally as
\begin{equation}\label{LinSys}
\begin{cases}\dot{x}=Ax+Bu,\,x(0)=x_0\\y=Cx.\end{cases}
\end{equation}
Here $u:[0,\infty)\to U$ is a control input meant to steer the state $x(t)$ so as to fulfill some objective. The state itself is not available for direct observation, only some reduction of it $y(t)$ is. One typical control objective is optimal asymptotic stabilization, that is driving the state towards the origin asymptotically while minimizing a "cost". A popular way to do so is to minimize a quadratic cost functional 
\begin{equation}\label{QuadCost}
J[u,x_0]:=\int_0^\infty(y,y)_V+(u,u)_U\,dt\to\min,
\end{equation}
where $(\,,\,)_U$, $(\,,\,)_V$ are the inner products on $U$ and $V$ respectively. By analogy to the Hilbert case \cite[12.1]{Wh}, \cite[IV.4.1]{Zb}, one can show formally that the optimal control can be found in the feedback form $u(t)=-B^*Px(t)$, where $P$ is a bounded symmetric positive definite operator that solves the algebraic operator Riccati equation:
\begin{equation}\label{OpRicc}
A^*P+PA-PBB^*P+C^*C=0.
\end{equation}
Note that when the feedback control is applied, again formally, the state evolution equation becomes $\dot{x}=(A-BB^*P)x$, so the control objective can be accomplished only if $A-BB^*P$ generates a stable semigroup. When $X$ is a Banach space $P$ has to be interpreted as an operator from $X$ to $X^*$, but the usual Hilbert notions generalize naturally to such operators.
\begin{definition}\label{OrdConc} Let $\B(X,Y)$ denote the Banach space of bounded linear operators from a Banach space $X$ to a Banach space $Y$ with the induced norm. Denoting $\langle\,,\,\rangle$ the duality pairing between $X$ and $X^*$ we call 
$P\in\B(X,X^*)$ {\sl symmetric} if $\langle Px,y\rangle=\langle Py,x\rangle$ for all $x,y\in X$, or equivalently $P^*\big|_{X}=P$ (where $X$ is identified with a subspace of $X^{**}$). We call $P$ {\sl positive definite} if $\langle Px,x\rangle\geq0$ for all $x\in X$. The subspace of symmetric operators is denoted $\B_s(X,X^*)$, and the cone of positive definite ones in it $\B_s^+(X,X^*)$. Notations $\B_s(X^*,X)$ and $\B_s^+(X^*,X)$ are defined similarly.
\end{definition}
Most authors dealing with operator Riccati equations assume $X$ to be Hilbert (see however \cite{vN2,PvN,RM}), in which case $X^*$ can be identified with $X$, and $P$ is a self-adjoint operator on $X$. However, in applications the state space is dictated by the problem, and is often not a Hilbert space. It may seem natural to also allow the control and the observation spaces $U$ and $V$ to be non-Hilbert, but that leads to no real generalization as long as the cost functional remains quadratic. If one simply replaces the inner products in \eqref{QuadCost} by positive definite quadratic forms we can use them to define inner products, which brings us back to the Hilbert spaces $U$ and $V$.

As usual, we identify $U$ and $V$ with their duals, and set $N:=BB^*\in\B_s^+(X^*,X)$ and $Q:=C^*C\in\B_s^+(X,X^*)$ for brevity. Since $A$ and $A^*$ are only densely defined some care is needed to define solutions to \eqref{OpRicc}.  The traditional interpretation is to require that for all $x,y\in\D_A$ one has 
\begin{equation}\label{DualParRicc}
\langle Px,Ay\rangle+\langle PAx,y\rangle-\langle Px,NPy\rangle+\langle Qx,y\rangle=0.
\end{equation}
In the Hilbert case this is called inner product Riccati equation \cite[IV.4.2]{Zb}, so it is natural in general to call it the duality pairing Riccati equation. It is considered e.g. in \cite{vN2,PvN,RM}. We will use a different interpretation of 
\eqref{OpRicc} which treats $P\mapsto A^*P+PA$ as the generator of a positive semigroup.
\begin{definition} 
Let $T(t)$ be a $C_0$ semigroup on $X$ with the generator $A$. The semigroup 
$\T(t)P:=T^*(t)\,P\,T(t)$ on $\B(X,X^*)$ and its restriction to $\B_s(X,X^*)$ will be called the {\sl Lyapunov semigroup} of $T(t)$, and its generator will be denoted $\L_A$ and called the {\sl Lyapunov generator}.
\end{definition}
\noindent The semigroup property of $\T(t)$ is straightforward to verify as is its positivity, indeed 
$\langle\bigl(\T(t)P\bigr)x,x\rangle=\langle PT(t)x,T(t)x\rangle\geq0$ for positive definite $P$. But it is not obvious in general that it is continuous in some useful sense, for instance it is never $C_0$ for unbounded $A$. However, it was shown in \cite{K2} that Lyapunov semigroups are adjoint to $C_0$ semigroups on preduals to $\B(X,X^*)$ and $\B_s(X,X^*)$, in other words they are always $C_0^*$ semigroups \cite[3.1]{BR}. 

This allows us to make sense of the generator $\L_A$ and use its standard properties. Moreover, it turns out, see Theorem \ref{ComGen}, that the domain of $\L_A$ consists of operators $P$ that satisfy $P(\D_A)\subseteq\D_{A^*}$ and have $\|A^*P+PA\|<\infty$. Then $\L_AP$ is the extension of the bounded operator $A^*P+PA$ from $\D_A$ to the entire space. Therefore, we can interpret equation \eqref{OpRicc} as $\A P+\Phi(P)=0$ with $\A=\L_A$ and $\Phi(P)=-PNP+Q$, and view solutions as elements of $\D_\A$ that satisfy it literally. It also follows from Theorem \ref{ComGen} that solutions so defined are the same as solutions to the duality pairing Riccati equation. Our conditions for their existence are in terms of stabilizability and detectability, which for matrices go back to Wonham \cite[12.6]{Wh}.

\begin{definition} Let $A$ be a generator of a $C_0$ semigroup on $X$ and $B\in\B(U,X)$, $C\in\B(X,V)$. The pair $(A,B)$ is called {\sl exponentially stabilizable} if there exists $K\in\B(X,U)$ such that $A-BK$ generates an exponentially stable $C_0$ semigroup, and the pair $(C,A)$ is called {\sl exponentially detectable} if there exists $L\in\B(V,X)$ such that $A-LC$ generates an exponentially stable $C_0$ semigroup.
\end{definition}
\noindent Note that disregarding continuity stabilizability is formally dual to detectability, i.e. $(A,B)$ is exponentially stabilizable whenever $(B^*,A^*)$ is exponentially detectable. For more on their meaning in control theory see 
Section \ref{S6}. 

We will now state our theorems for Lyapunov and Riccati equations. In the following we abuse notation by identifying operators  $A^*P+PA$ with their closures.
\begin{theorem}\label{Wonh} Let $X$ be a Banach space and $T(t)$ be a $C_0$ semigroup on it 
with the generator $A$. If the pair $(C,A)$ is exponentially detectable then the following conditions are equivalent:

{\rm(i)} $A^*P+PA=-C^*C$ has a positive definite solution $P\in\B_s^+(X,X^*)$ such that 
$P(\D_A)\subseteq\D_{A^*}$;

{\rm(ii)} $T(t)$ is exponentially stable;

{\rm(iii)} The Lyapunov generator $\L_A$ has a bounded inverse on $\B_s(X,X^*)$ and $-(\L_A)^{-1}\geq0$.
\end{theorem}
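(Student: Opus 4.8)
The plan is to prove the three conditions equivalent by closing the cycle (ii)$\Rightarrow$(iii)$\Rightarrow$(i)$\Rightarrow$(ii), and I note at the outset that detectability of $(C,A)$ is used only in the last, and hardest, implication; the first two hold for an arbitrary $C_0$ semigroup. The whole argument rests on reading the Lyapunov equation as $\L_A P=-C^*C$ and exploiting the positivity of $\T(t)$ together with the standard resolvent/integral representation for generators of exponentially stable semigroups.

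For (ii)$\Rightarrow$(iii): if $T(t)$ is exponentially stable, say $\|T(t)\|\leq Me^{-\omega t}$, then $\|\T(t)P\|=\|T^*(t)PT(t)\|\leq\|T(t)\|^2\|P\|$ shows the Lyapunov semigroup decays like $M^2e^{-2\omega t}$. Since $\T(t)=S(t)^*$ for a $C_0$ semigroup $S(t)$ on the predual, $S(t)$ is exponentially stable with the same bound, so its generator $A_*$ is boundedly invertible with $-A_*^{-1}=\int_0^\infty S(t)\,dt$ as a strongly convergent Bochner integral. Dualizing, $\L_A=A_*^*$ is boundedly invertible on $\B_s(X,X^*)$ with $-\L_A^{-1}=\int_0^\infty\T(t)\,dt$ in the weak* sense. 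Positivity then follows by testing: $\langle(-\L_A^{-1}Q)x,x\rangle=\int_0^\infty\langle(\T(t)Q)x,x\rangle\,dt\geq0$ for every $Q\geq0$ and $x\in X$, because each $\T(t)$ preserves positive definiteness. Hence $-\L_A^{-1}\geq0$, which is (iii).

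For (iii)$\Rightarrow$(i): set $P:=-\L_A^{-1}(C^*C)$. Then $P\geq0$ since $C^*C\geq0$ and $-\L_A^{-1}\geq0$; $P$ is symmetric because $\L_A^{-1}$ acts within $\B_s(X,X^*)$; and $P$ lies in the range of $\L_A^{-1}$, hence in $\D_{\L_A}$, so by Theorem \ref{ComGen} we have $P(\D_A)\subseteq\D_{A^*}$ and $\L_AP=A^*P+PA=-C^*C$. This is exactly (i).

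The main obstacle is (i)$\Rightarrow$(ii). Starting from a solution $P\geq0$, for $x_0\in\D_A$ the function $t\mapsto\langle PT(t)x_0,T(t)x_0\rangle$ is differentiable with derivative $\langle(A^*P+PA)T(t)x_0,T(t)x_0\rangle=-\|CT(t)x_0\|^2$; integrating and using $P\geq0$ yields the output bound $\int_0^\infty\|CT(s)x_0\|^2\,ds\leq\langle Px_0,x_0\rangle\leq\|P\|\,\|x_0\|^2$, which extends to all $x_0\in X$ by density. Now detectability supplies $L$ with $A_L:=A-LC$ generating an exponentially stable $T_L(t)$, and since $LC$ is bounded the Duhamel formula $T(t)x_0=T_L(t)x_0+\int_0^t T_L(t-s)LC\,T(s)x_0\,ds$ holds. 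Bounding $\|T_L(t)\|\leq Me^{-\omega t}$ and writing $h(s):=\|L\|\,\|CT(s)x_0\|\in L^2$, Young's inequality gives $\|Me^{-\omega\cdot}*h\|_{L^2}\leq(M/\omega)\|h\|_{L^2}$, so $t\mapsto T(t)x_0$ lies in $L^2([0,\infty);X)$ with $\|T(\cdot)x_0\|_{L^2}\leq c\,\|x_0\|$ for all $x_0$. Datko's theorem then forces $T(t)$ to be exponentially stable, closing the cycle. The delicate point is converting the merely $L^2$ output estimate into genuine uniform exponential decay: detectability is precisely what lets us reconstruct the full trajectory from the output via Duhamel, after which Young's inequality and Datko's theorem finish the job. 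I expect the bookkeeping around the weak* integral representation in (ii)$\Rightarrow$(iii) and the density extension of the output bound to demand the most care.
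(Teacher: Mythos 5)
Your proof is correct, but it follows a genuinely different route from the paper's. The paper disposes of this theorem in two sentences: it checks that $\X=\B_s(X,X^*)$, the Lyapunov semigroup $\T(t)$, $z=C^*C$ and the set of tensor squares satisfy the hypotheses of the abstract Theorem \ref{Lyappi} (whose proof lives in \cite{K2}), using Lemma \ref{StabSub} (tensor squares are a stability subset, via Datko--Pazy) and the fact that exponential detectability makes $C^*C$ an $L^1$ detector (Lemma \ref{ExpDetL2} with $K=0$). Your argument is instead self-contained and unfolds exactly what that machinery encapsulates: your (ii)$\Rightarrow$(iii)$\Rightarrow$(i) cycle is the resolvent-integral argument on the predual ($-\A_*^{-1}=\int_0^\infty\T_*(t)\,dt$, dualized and tested against tensor squares $x\otimes x$, then fed into Theorem \ref{ComGen} to recover the domain condition $P(\D_A)\subseteq\D_{A^*}$), while your (i)$\Rightarrow$(ii) inlines two separate ingredients of the paper: the abstract detector argument (Lyapunov functional gives the output bound $\int_0^\infty\|CT(t)x_0\|^2dt\leq\langle Px_0,x_0\rangle$, then Datko) and Zabczyk's Duhamel/Young argument \cite{Zb0} showing that exponential detectability upgrades the output bound to a state bound --- which is precisely the content of Lemma \ref{ExpDetL2}, whose proof the paper omits. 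What the paper's route buys: brevity, reuse of machinery needed anyway for Theorem \ref{WonhBan}, and a formally weaker hypothesis --- as remarked in Section \ref{S6}, only $L^2$ detectability of $(C,A)$ is required, in which case the output bound converts directly into $\int_0^\infty\|T(t)x_0\|^2dt<\infty$ with no Duhamel step at all. What your route buys: independence from the external reference \cite{K2} and an explicit display of where each analytic ingredient (positivity of $\T(t)$, the generator characterization of Theorem \ref{ComGen}, boundedness of the perturbation $LC$, Datko's theorem) enters. Your observation that detectability is needed only for (i)$\Rightarrow$(ii) is consistent with the paper's structure.
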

\noindent Recall that we defined "positive definite" by a non-strict inequality, so for $C=0$ the unique solution is $P=0$, and the theorem is vacuously true ($T(t)$ is exponentially stable by definition of $(0,A)$ being exponentially detectable). The exponential detectability above can be replaced with a weaker condition of detectability in $L^2$, see Section \ref{S6}. This theorem was proved in \cite{K2} for reflexive spaces, but the reflexivity assumption can be dropped using Theorem \ref{ComGen}.
\begin{theorem}\label{WonhBan} Let $A$ be the generator of a $C_0$ semigroup on  $X$, $U,V$ be Hilbert spaces, and 
$B\in\B(U,X)$, $C\in\B(X,V)$. Suppose that $(A,B)$ is exponentially stabilizable and $(C,A)$ is exponentially detectable. Then
 
{\rm(i)} The algebraic operator Riccati equation 
\begin{equation}\label{TheoRicc}
A^*P+PA-PBB^*P+C^*C=0
\end{equation}
has a unique positive solution $P$ such that $P(\D_A)\subseteq\D_{A^*}$, and
$\|A^*P+PA\|<\infty$. This solution is stabilizing, i.e. $A-BB^*P$ generates an exponentially stable semigroup .

{\rm(ii)} Pick $P_0$ so that $A-BB^*P_0$ generates an exponentially stable semigroup. This can always be done, e.g. by solving the Lyapunov equation
\begin{equation*}
(A-BK)^*P_{0}+P_{0}(A-BK)=-C^*C-K^*K,
\end{equation*}
with $A-BK$ generating an exponentially stable semigroup. Then the solution $P$ can be obtained  as a monotone strong operator limit of solutions to linear Lyapunov equations
\begin{equation}\label{LyapIter}
(A-BB^*P_n)^*P_{n+1}+P_{n+1}(A-BB^*P_n)=-C^*C-P_nBB^*P_n\,.
\end{equation}
Moreover, there is $\varkappa>0$ such that $\|P-P_{n+1}\|\leq\varkappa\|P-P_n\|^2$ (quadratic convergence).
\end{theorem}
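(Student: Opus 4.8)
The plan is to recognize \eqref{TheoRicc} as the abstract quasilinear concave equation $\A P+\Phi(P)=0$ of Theorem \ref{NewtRicc} and to verify its hypotheses, so that existence, uniqueness, the stabilizing property, and the monotone quadratically convergent iteration all follow at once. I take $\A=\L_A$, which generates a positive $C_0^*$ semigroup by the framework of Section \ref{S5}, and $\Phi(P)=-PNP+Q$ with $N=BB^*$ and $Q=C^*C$. A direct expansion gives the Gateaux derivative $\Phi'(P)H=-PNH-HNP$ with second-order remainder $-HNH\le0$, so $\Phi$ is order concave. The key algebraic observation is that $\A+\Phi'(P)=\L_{A-NP}$, the Lyapunov generator of the feedback semigroup $A-NP=A-BB^*P$; consequently the Newton iteration $\A P_{n+1}+\Phi(P_n)+\Phi'(P_n)(P_{n+1}-P_n)=0$ is, after completing the square, exactly the sequence of Lyapunov equations \eqref{LyapIter}.

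Next I would produce the admissible starting datum. Stabilizability supplies $K$ with $A-BK$ exponentially stable, whence by Theorem \ref{Wonh}(ii)$\Rightarrow$(iii) the operator $\L_{A-BK}$ is boundedly invertible with nonpositive inverse and $P_0:=-\L_{A-BK}^{-1}(C^*C+K^*K)\in\B_s^+(X,X^*)$ solves the stated Lyapunov equation. Completing the square yields $F(P_0):=\A P_0+\Phi(P_0)=-(K-B^*P_0)^*(K-B^*P_0)\le0$ and $\L_{A-NP_0}P_0=-C^*C-(K-B^*P_0)^*(K-B^*P_0)-P_0NP_0\le-C^*C$. To upgrade this Lyapunov inequality to exponential stability of $A-NP_0$ I would write its right-hand side as $-\tilde C^*\tilde C$ with $\tilde C$ stacking $C$, $K-B^*P_0$ and $B^*P_0$, observe that $(\tilde C,A-NP_0)$ is detectable (inject $-B$ through the $B^*P_0$ output to reduce $A-NP_0-L\tilde C$ to $A-L_0C$, stable by detectability of $(C,A)$), and invoke Theorem \ref{Wonh}(i)$\Rightarrow$(ii).

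With these hypotheses in hand, Theorem \ref{NewtRicc} delivers a decreasing sequence $P_{n+1}\le P_n$, bounded below, converging in the strong operator topology to $P\in\D_\A$ with $\A P+\Phi(P)=0$ and the quadratic estimate $\|P-P_{n+1}\|\le\varkappa\|P-P_n\|^2$. The induction that keeps the scheme running is that each $A-NP_{n+1}$ remains exponentially stable: concavity gives $F(P_{n+1})=-(P_{n+1}-P_n)N(P_{n+1}-P_n)\le0$, hence $\L_{A-NP_{n+1}}P_{n+1}=-\tilde C^*\tilde C\le-C^*C$ for $\tilde C$ now stacking $C$, $B^*P_{n+1}$, $B^*(P_{n+1}-P_n)$, and the same cancellation shows $(\tilde C,A-NP_{n+1})$ detectable, so Theorem \ref{Wonh} again gives stability. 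Passing to the limit I would use the continuity of the quadratic map $\Phi$ along monotone sequences and the weak* theory of Section \ref{S5}, together with the domain characterization of $\L_A$ in Theorem \ref{ComGen}, to conclude $P(\D_A)\subseteq\D_{A^*}$, $\|A^*P+PA\|<\infty$, and that the limiting $A-NP$ is exponentially stable, i.e. $P$ is stabilizing.

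For uniqueness I would first note that detectability forces every positive solution to be stabilizing: if $P$ solves \eqref{TheoRicc} then $\L_{A-NP}P=-C^*C-(B^*P)^*(B^*P)=-\hat C^*\hat C$ with $(\hat C,A-NP)$ detectable, so $A-NP$ is stable by Theorem \ref{Wonh}. Given two positive solutions $P,\bar P$ their difference $\Delta=P-\bar P$ then satisfies the homogeneous Sylvester equation $(A-N\bar P)^*\Delta+\Delta(A-NP)=0$ with both generators exponentially stable, which forces $\Delta=0$. The main obstacle throughout is precisely this repeated passage from a Lyapunov equation possessing a positive solution to genuine exponential stability of the perturbed generator: it must be re-established at $P_0$, at every $P_{n+1}$, and at the limit, each time by the completion-of-squares identity that manufactures the correct detectable pair and a single appeal to Theorem \ref{Wonh}. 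The attendant analytic difficulty is that all limits live in the $C_0^*$ (not $C_0$) setting, so convergence and membership in $\D_\A$ have to be controlled in the weak* topology of the predual tensor product, which is what the monotone continuity results of Section \ref{S5} are designed to supply.
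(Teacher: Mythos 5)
Your overall strategy coincides with the paper's: interpret \eqref{TheoRicc} as $\L_AP+\Phi(P)=0$, check order concavity and the identity $\L_A+\Phi'(P)=\L_{A-BB^*P}$, produce $P_0$ by solving the Lyapunov equation for the stabilizing feedback $A-BK$ and completing the square, and feed everything into the abstract Newton iteration of Theorem \ref{NewtRicc} (your $P_0$ step is exactly the paper's Lemma \ref{ExpStabIff}). Two ingredients genuinely differ, and both work. First, wherever the paper routes detectability through the $L^1$/$L^2$ detector machinery (Lemma \ref{ExpDetL2}, whose proof it omits with a citation, and Corollary \ref{GlobL1Det}), you stack outputs into $\tilde C$ and exhibit an explicit injection, e.g.\ $L(v,u_1,u_2)=L_0v-Bu_2$, collapsing $A-BB^*P-L\tilde C$ to $A-L_0C$; this makes each augmented pair exponentially detectable so Theorem \ref{Wonh} applies directly, and it is more self-contained than the paper's appeal to the omitted lemma. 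Second, your uniqueness argument via the homogeneous Sylvester equation $(A-BB^*\bar P)^*\Delta+\Delta(A-BB^*P)=0$ with two stable generators differs from the paper's, which instead uses concavity: $0=F(x)-F(x_\infty)\leq F'(x_\infty)(x-x_\infty)$ together with $-F'(x_\infty)^{-1}\geq0$ gives $x\leq x_\infty$, then switch roles. Yours is valid but ``forces $\Delta=0$'' needs a line of justification in this unbounded setting: for $x,y\in\D_A$ differentiate $\langle T_{\bar P}^*(t)\Delta T_P(t)x,y\rangle$ (legitimate because $\Delta(\D_A)\subseteq\D_{A^*}$), find it constant, and let $t\to\infty$; the paper's concavity route avoids this entirely.

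The genuine gap is the final claim of part (ii). You assert that Theorem \ref{NewtRicc} delivers strong operator convergence and the estimate $\|P-P_{n+1}\|\leq\varkappa\|P-P_n\|^2$; it delivers neither --- its conclusion is only weak* convergence. The strong operator upgrade is cheap (the sequence is order bounded and monotone, so Theorem \ref{Ls+}(iii) applies), but the quadratic estimate requires a separate argument that your proposal never supplies. The paper's goes: from Lemma \ref{Iter}(ii), $F(P_{n+1})=-\Psi(P_n,P_{n+1})$, and concavity gives $F'(P)(P_{n+1}-P)\geq F(P_{n+1})-F(P)=-\Psi(P_n,P_{n+1})$; since $P$ is stabilizing, $-F'(P)^{-1}=-(\L_{A-BB^*P})^{-1}$ is bounded and positive by Theorem \ref{Lyappi}(iii), whence $0\leq P_{n+1}-P\leq-F'(P)^{-1}\Psi(P_n,P_{n+1})$; finally, monotonicity of the operator norm on $\B_s^+(X,X^*)$ --- Theorem \ref{Ls+}(i), a fact that is not automatic in Banach spaces and is precisely why that theorem is proved --- yields $\|P_{n+1}-P\|\leq\|F'(P)^{-1}\|\,\|(P_n-P_{n+1})BB^*(P_n-P_{n+1})\|\leq\|F'(P)^{-1}\|\,\|B\|^2\|P_n-P\|^2$. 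Without some such argument, the quadratic-convergence assertion in the statement remains unproved in your write-up.
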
 
\noindent The approximations in part (ii) are obtained by formally applying Newton's method to the equation 
$F(P):=\L_A P+\Phi(P)=0$, and monotonicity is due to the fact that $F(P)$ is order concave, that is $F\big(\alpha P+(1-\alpha)R\big)\geq\alpha F(P)+(1-\alpha)F(R)$ for $\alpha\in[0,1]$. The proof of convergence relies on the properties of $C_0^*$ generators. In fact, we first prove an existence result in a general setting of order concave equations with linear parts generating positive $C_0^*$ semigroups, see Theorem \ref{NewtRicc}. Quadratic convergence, unfortunately, is not nearly as useful in infinite dimensional Banach spaces as in finite dimensional ones. The theorem only assures strong operator convergence of $P_n$ to $P$, so $\|P-P_n\|$ may never get smaller than $\frac1\kappa$. But only if that happens does quadratic convergence imply exponential rate of convergence by norm.

Note that that $B$ and $C$ enter \eqref{TheoRicc} only through $N:=BB^*\in\B_s^+(X^*,X)$ and $Q:=C^*C\in\B_s^+(X,X^*)$. One could restate the theorem using only these operators because they can always be canonically factored. For $Q$ for example define an inner product on $\text{Ran}(Q)\subset X^*$ by $(Qx,Qy):=\langle Qx,y\rangle$. This is well defined, if $Q\widetilde{x}=Qx$ and $Q\widetilde{y}=Qy$ the value is the same since $Q^*\big|_{X}=Q$. Denote by $V$ the completion of $\text{Ran}(Q)$ in the Hilbert norm, this $V$ is called the reproducing kernel Hilbert space \cite{GvN,vN2}. Set $Cx:=Qx$, to find the adjoint take $z=Qy\in\text{Ran}(Q)$ and compute
$$
(Cx,z)=(Qx,Qy)=\langle Qx,y\rangle=\langle x,Qy\rangle=\langle x,z\rangle\,.
$$
So $C^*$ acts on $\text{Ran}(Q)$ as the natural inclusion to $X^*$, and hence coincides with it on $V$ since the range is dense. Thus, $Q=C^*C$, and $C$ is canonically recovered from $Q$. When $X$ is a Hilbert space $C$ can be identified with the positive square root $Q^{\frac12}$, and $V$ with $\text{Ran}(Q^{\frac12})$. A similar construction factorizes $N$.
To illustrate the theorem we give a couple of examples.
\bigskip

\noindent {\bf Example 1:}
Any operator $N\in\B_s^+(X^*,X)$ defines multiplication on $\B(X,X^*)$ by $(P,R)\mapsto PNR$. Given also a $Q\in\B_s^+(X,X^*)$ we can talk about taking a "positive square root", i.e. solving $PNP=Q$ for $P\in\B_s^+(X,X^*)$. It is clear however that even for matrices this equation is not solvable for all pairs $N$ and $Q$. 

Consider instead a regularized equation $PNP+2aP=Q$ for some $a>0$. Written as $-2aP-PNP+Q=0$ it is of the 
form \eqref{TheoRicc} with $A=-aI$, where $I$ is the identity operator. Since this $A$ generates an exponentially stable semigroup for any $a>0$ the pair $(A,B)$ is exponentially stabilizable for any $B$, and $(C,A)$ is exponentially detectable for any $C$. Theorem \ref{TheoRicc} now implies existence of a unique stabilizing solution. One can take $P_0:=\frac1{2a}Q$ as the initial guess for the Newton's iteration. 

Note that for $a=0$ Wonham's theory does not apply in general even if $PNP=Q$ is solvable. Existence of a stabilizing solution implies that $-NP$ generates an exponentially stable semigroup, and therefore is invertible. Thus, a linear isomorphism of a Banach space would have to factor through a Hilbert space, a very special property.
\bigskip

\noindent {\bf Example 2:}
In many applications the control and the observation spaces are finite dimensional \cite{AAW}. For asymptotic stabilization to be possible the "unstable part" of the generator $A$ has to be finite-dimensional as well. To make this more precise assume that $A$ is {\sl exponentially dichotomous}, i.e. the space $X$ is a direct sum $X_-\dot{+}X_+$ of $T(t)$ invariant subspaces such that the restrictions $A_{-}:=A\big|_{X_-}$ and $A_{+}:=-A\big|_{X_+}$ generate exponentially stable semigroups 
\cite{RM}. It follows that the spectrum of $A$ does not meet the imaginary axis, and if $A$ is bounded this suffices for  the exponential dichotomy. 

Assume that $\dim(X_+)<\infty$ and $X_+\subseteq\Ran B$. Denoting $I_+$ the identity operator on $X_+$ we can use a basis in $X_+$ to define $K\in\B(X,U)$ such that $BK=0\,\dot{+}\,(A_++I_+)$. Then $A-BK=A_-\,\dot{+}\,(-I_+)$ generates an exponentially stable semigroup, and $(A,B)$ is exponentially stabilizable. Similarly, $(C,A)$ is exponentially detectable if $\Ker C\subseteq X_-$ since then we can define $L\in\B(V,X)$ such that $LC=0\,\dot{+}\,(A_++I_+)$. Thus, if the range and the kernel conditions are satisfied there is a unique stabilizing feedback control, a result widely used in Hilbert spaces.

Note that we actually need something weaker than the exponential dichotomy in this case, as long as $\dim(X_+)$ is finite $A_+$ can be allowed to have purely imaginary or even positive real part spectrum. If $\dim(X_+)=\infty$ however the range and the kernel conditions are not enough, and their replacements depend on conditions for factorization through Hilbert spaces \cite[Ch.2]{Pis}.

\section{Topologies and continuity}\label{S5}

This section develops some analytic tools for studying the operator Riccati equations, and proving convergence of approximations to their solutions. First, we introduce projective tensor products that are preduals to spaces of operators. Then we consider implemented semigroups, that restrict to the Lyapunov semigroups, and tensor product semigroups, that they are adjoint to. Their continuity in some natural operator topologies is discussed, and the generators are described. To handle the nonlinear part of the equations we then look into properties of positive cones in spaces of symmetric operators, and continuity of quadratic maps on monotone sequences in them. 

\begin{definition}\label{tenprod} Let $X$ and $Y$ be Banach spaces and $X\otimes Y$ be their algebraic tensor product. The  duality pairing between $\B(X,Y^*)$ and $X\otimes Y$ is defined by $\llangle x\otimes y,P\rrangle:=\langle Px,y\rangle$ on monomials, and extended by linearity. Given $\rho\in X\otimes Y$ its {\sl projective tensor norm} is defined by 
$$
\textstyle{\|\rho\|:=\inf\{\sum_i\|x_i\|\|y_i\|\,\Big|\,\rho=\sum_ix_i\otimes y_i,\,x_i\in X,\,y_i\in Y\}\,.}
$$
The {\sl projective tensor product $X\otimes_\pi Y$} is the completion of $X\otimes Y$ in this norm \cite[2.1]{Ry}. 
\end{definition}
\noindent The projective tensor product is convenient for several reasons. First, its elements can be described explicitly, they are of the form $\sum_{i=1}^\infty a_i\,x_i\!\otimes y_i$ with bounded sequences $x_i, y_i\in X$, and a summable numerical sequence $a_i\in\R$ (this is the Grothendieck representation theorem \cite[III.6.4]{Sch}). Second, since $\|x\otimes y\|=\|x\|\|y\|$ it is almost immediate that the pairing $\llangle\,\cdot\,,\,\cdot\,\rrangle$ extends from $X\otimes Y$ to 
$X\otimes_\pi Y$, giving us a Banach space isomorphism $(X\otimes_\pi Y)^*\simeq\B(X,Y^*)$. Moreover, the dual norm on $\B(X,Y^*)$ is exactly the induced operator norm, so this isomorphism is isometric \cite[2.1]{Ry}. Finally, the semigroups on $X\otimes_\pi Y$ that the Lyapunov semigroups are adjoint to also admit an explicit description, see Definition \ref{ImplTens}.

Now let us recall some natural topologies on operator spaces. Given two Banach spaces $X$ and $Y$ consider the space 
$\B(X,Y^*)$ of bounded operators from $X$ to the Banach dual of $Y$. Aside from the weak* topology induced by duality with the predual $X\otimes_\pi Y$ the following three topologies will be of use to us:
\begin{itemize}
\item Weak operator* (wo*) with seminorms $R\mapsto|\langle Rx,y\rangle|$ for $x\in X,y\in Y$;
\item Strong operator (so) with seminorms $R\mapsto\|Rx\|$ for $x\in X$;
\item Ultraweak* (uw*) with seminorms $R\mapsto|\sum_{i=1}^\infty\langle Rx_i,y_i\rangle|$ for $x_i\in X,y_i\in Y$, and\\ 
$\sum_{i=1}^\infty\|x_i\|^2<\infty$, $\sum_{i=1}^\infty\|y_i\|^2<\infty$.
\end{itemize}
\noindent The weak operator* topology is obviously weaker than the other two, and the same argument as for Hilbert spaces \cite[I.3.2]{Dix} shows that it coincides with the ultraweak* topology on bounded subsets of $\B(X,Y^*)$. Recall that given a locally convex space $\X$ and a separating subspace $\mathcal{Y}$ of its algebraic dual, $\s(\X,\mathcal{Y})$ denotes the weakest topology on $\X$ in which all functionals from $\mathcal{Y}$ are continuous. It is specified by the seminorms 
$u\mapsto|\langle u,v\rangle|$, $v\in\mathcal{Y}$. With this notation we have the following.

\begin{lemma}\label{Ultra*} The weak operator* topology coincides with $\s(\B(X,Y^*),X\otimes Y)$, and the ultraweak* topology coincides with the weak* topology, $\s(\B(X,Y^*),X\otimes_\pi Y)$.
\end{lemma}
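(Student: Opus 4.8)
The plan is to use the elementary principle that a weak topology is determined by the linear span of the functionals defining it, so that two such topologies on $\B(X,Y^*)$ coincide exactly when they arise from the same subspace of the dual. In each of the two claims I would therefore identify the defining seminorms with pairings against a concrete subspace of functionals and verify that the two subspaces agree. For the first claim this identification is essentially immediate: the wo* seminorm $R\mapsto|\langle Rx,y\rangle|$ is precisely $|\llangle x\otimes y,R\rrangle|$, the $\s$-seminorm attached to the monomial $x\otimes y$. Since the monomials span $X\otimes Y$, the topology they generate is $\s(\B(X,Y^*),X\otimes Y)$, and one only notes that $X\otimes Y$ separates points of $\B(X,Y^*)$, which is clear because $\langle Rx,y\rangle=0$ for all $x,y$ forces $R=0$.

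For the second claim I would again reduce everything to comparing functionals. Writing $\Y_{uw}$ for the set of functionals $R\mapsto\sum_i\langle Rx_i,y_i\rangle$ with $\sum_i\|x_i\|^2<\infty$ and $\sum_i\|y_i\|^2<\infty$, the uw* topology is $\s(\B(X,Y^*),\Y_{uw})$, and the goal is to prove $\Y_{uw}=X\otimes_\pi Y$ as subspaces of the dual. That $\Y_{uw}$ is genuinely a subspace follows by interleaving two square-summable pairs of sequences into one. The inclusion $\Y_{uw}\subseteq X\otimes_\pi Y$ comes from Cauchy--Schwarz: for a square-summable pair the series $\sum_i x_i\otimes y_i$ converges absolutely in projective norm, since $\sum_i\|x_i\|\,\|y_i\|\le(\sum_i\|x_i\|^2)^{1/2}(\sum_i\|y_i\|^2)^{1/2}<\infty$, and the associated functional equals $\llangle\rho,\cdot\rrangle$ for the resulting $\rho\in X\otimes_\pi Y$.

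The reverse inclusion is where the real content lies, and I expect it to be the main (though still modest) obstacle. Here I would invoke the Grothendieck representation theorem, already cited in the excerpt: every $\rho\in X\otimes_\pi Y$ has the form $\sum_i a_i\,x_i\otimes y_i$ with bounded sequences $x_i,y_i$ and $\sum_i|a_i|<\infty$. Redistributing the coefficients by setting $\tilde x_i:=|a_i|^{1/2}x_i$ and $\tilde y_i:=\mathrm{sgn}(a_i)\,|a_i|^{1/2}y_i$ gives $\tilde x_i\otimes\tilde y_i=a_i\,x_i\otimes y_i$, while $\sum_i\|\tilde x_i\|^2\le(\sup_i\|x_i\|^2)\sum_i|a_i|<\infty$ and likewise for $\tilde y_i$; thus $\llangle\rho,\cdot\rrangle$ lies in $\Y_{uw}$. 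Once the two subspaces of functionals are shown equal, the weak topologies they induce coincide, giving uw* $=\s(\B(X,Y^*),X\otimes_\pi Y)$.

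The delicate points are thus confined to the redistribution step and to justifying the series manipulations termwise. Both are controlled by the isometric duality $(X\otimes_\pi Y)^*\simeq\B(X,Y^*)$ recorded before the statement, which guarantees that $X\otimes_\pi Y$ separates points (so the weak* topology is well defined) and that pairing against a convergent projective series may be evaluated summand by summand. Apart from this bookkeeping, the argument is a direct translation between the $\ell^2\times\ell^2$ description of uw* functionals and the $\ell^1$-with-bounded-sequences description of the projective tensor product.
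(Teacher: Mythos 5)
Your proposal is correct and takes essentially the same route as the paper's own proof: the first claim is read off from $\langle Rx,y\rangle=\llangle x\otimes y,R\rrangle$, and for the second claim both directions rest on exactly the two ingredients the paper uses, namely the Cauchy--Schwarz estimate $\sum_i\|x_i\|\|y_i\|\le(\sum_i\|x_i\|^2)^{1/2}(\sum_i\|y_i\|^2)^{1/2}$ to place square-summable pairs inside $X\otimes_\pi Y$, and the Grothendieck representation theorem with the $|a_i|^{1/2}$ coefficient-splitting trick to go back. The only cosmetic difference is that you phrase the comparison as equality of two subspaces of functionals, while the paper phrases it as equality of the two families of continuous seminorms; these are interchangeable.
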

\begin{proof} The first claim is almost obvious from definitions since $\langle Rx,y\rangle=\llangle x\otimes y,R\rrangle$. For the second claim let $p(R):=|\sum_{i=1}^\infty\langle Rx_i,y_i\rangle|$ be 
an ultraweak* continuous seminorm, and set $\rho:=\sum_{i=1}^\infty x_i\otimes y_i$. Taking the projective tensor norm:
$$
\|\rho\|:=\|\sum_{i=1}^\infty x_i\otimes y_i\|\leq\sum_{i=1}^\infty\|x_i\otimes y_i\|
=\sum_{i=1}^\infty\|x_i\|\|y_i\|
\leq\Big(\sum_{i=1}^\infty\|x_i\|^2\Big)^{\frac12}\Big(\sum_{i=1}^\infty\|y_i\|^2\Big)^{\frac12}<\infty.
$$
Thus, $\rho\in X\otimes_\pi Y$ and $p(R)=|\llangle\rho,R\rrangle|$ is a $\s(\B(X,Y^*),X\otimes_\pi Y)$ seminorm.

Conversely, let $\rho\in X\otimes_\pi Y$. By the Grothendieck representation theorem $\rho=\sum_{i=1}^\infty a_i\,u_i\otimes v_i$ with $\|u_i\|,\|v_i\|\leq M<\infty$ and $\sum_{i=1}^\infty|a_i|<\infty$ \cite[III.6.4]{Sch}. Set $x_i:=\text{sign}(a_i)|a_i|^{\frac12}u_i$ and $y_i:=|a_i|^{\frac12}v_i$, then $\sum_{i=1}^\infty\|x_i\|^2<\infty$, $\sum_{i=1}^\infty\|y_i\|^2<\infty$, and $|\llangle\rho,R\rrangle|=|\sum_{i=1}^\infty\langle Rx_i,y_i\rangle|$ is an ultraweak* seminorm. Thus the ultraweak* and the weak* topologies share the same continuous seminorms, and hence coincide.
\end{proof}
There are two closely related classes of semigroups on $\B(X,Y^*)$ and $X\otimes_\pi Y$ induced by semigroups of bounded operators on $X$ and $Y$, see \cite[I.3.16]{EN}, \cite[3.4]{Kuh0} and \cite{Fr}. 
\begin{definition}\label{ImplTens} Let $X$ and $Y$ be Banach spaces and $T(t)$ and $S(t)$ be $C_0$ semigroups on $X$ and $Y$ respectively. The semigroup $\T(t)P:=S^*(t)\,P\,T(t)$ on $\B(X,Y^*)$ will be called the {\sl semigroup implemented by $T(t)$ and $S(t)$}. For a topology $\tau$ on $\B(X,Y^*)$ we define the $\tau$-generator of $\T(t)$ by $\A^\tau P:=\tau\text{-}\lim_{t\to0}\frac1t(\T(t)P-P)$ on the domain $\D_{\A^\tau}$ where the limit exists. The semigroup defined by $\T_*(t)(x\otimes y):=S(t)x\otimes T(t)y$ and extended by linearity and continuity to $X\otimes_\pi Y$ is called the {\sl tensor product semigroup} induced by $S(t)$ and $T(t)$. 
\end{definition}
\noindent As the notation indicates $\T(t)$ is adjoint to $\T_*(t)$ \cite[Thm.4]{K}, and therefore is a weak* continuous or $C_0^*$ semigroup on $\B(X,Y^*)$. By Lemma \ref{Ultra*} it is also ultraweak* and hence weak operator* continuous. Since the last two topologies coincide on bounded subsets we have $\A^{\text{uw*}}=\A^{\text{wo*}}$, and this generator is weak* densely defined and weak* closed. In general, $S^*(t)$ may only be a $C_0^*$ semigroup, but if $Y$ is reflexive it is a $C_0$ semigroup on 
$Y^*$ \cite[3.1.8]{BR}, \cite[IX.1]{Yo}. In that case $\T(t)$ is also strong operator continuous. Indeed, 
\begin{align*}
\|\bigl(&\T(t+h)P-\T(t)P\bigr)x\|\\
&\leq\|S^*(t+h)\|\,\|P\|\,\|T(t+h)x-T(t)x\|+\|\bigl(S^*(t+h)-S^*(t)\bigr)PT(t)x\|
\xrightarrow[h\to0]{}0\,.
\end{align*}
Moreover, $\T(t)$ is locally uniformly bounded in the sense of \cite{K}, and locally bi-continuous in the sense of K\"uhnemund \cite[Prop.3.16]{Kuh0}. Either property implies that $\A^{\text{so}}$ is strong operator densely defined, and strong operator sequentially continuous. 

As observed by Freeman \cite{Fr}, when $Y$ is reflexive the strong operator generator   coincides with the weak operator one, and can be described explicitly. We now prove a more general result that applies to non-reflexive spaces, and also describes the generator in terms of the duality pairing.
\begin{theorem}\label{ComGen} Let $\T(t)$ be a semigroup on $\B(X,Y^*)$ implemented by $C_0$ semigroups $T(t)$ and $S(t)$ on 
$X$ and $Y$ respectively, with generators $A$ and $B$ respectively. Define $\A P:=\overline{B^*P+PA}$ (the bar stands for closure) on the domain
$$
\D_\A:=\{P\in\B(X,Y^*)\,|\,P(\D_A)\subseteq\D_{B^*},\,\|B^*P+PA\|<\infty\}\,.
$$
Then,

{\rm(i)} $P\in\D_\A$ if and only if there exists a bounded operator $Q\in\B(X,Y^*)$ such that\\
$\langle Px,By\rangle+\langle PAx,y\rangle=\langle Qx,y\rangle$ for all $x\in\D_A$ and $y\in\D_B$, in which case 
$\A P=Q$;

{\rm(ii)} $\A^{\text{wo*}}=\A$;

{\rm(iii)} If additionally $Y$ is reflexive then $\A^{\text{so}}=\A$;
\end{theorem}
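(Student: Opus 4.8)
The plan is to reduce everything to a single fundamental-theorem-of-calculus identity for the implemented semigroup together with the standard characterization of the domain of an adjoint operator. The key observation, used repeatedly, is that the pairing $\langle\bigl(\T(t)P\bigr)x,y\rangle=\langle PT(t)x,S(t)y\rangle$ keeps us working with the $C_0$ semigroups $T(t)$ and $S(t)$ themselves, never needing $S^*(t)$ to be $C_0$ in the $\text{wo*}$ analysis.

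First I would dispatch part (i), which is essentially a reformulation. Given $P\in\D_\A$, the hypothesis $P(\D_A)\subseteq\D_{B^*}$ lets me write, for $x\in\D_A$ and $y\in\D_B$, $\langle(B^*P+PA)x,y\rangle=\langle B^*Px,y\rangle+\langle PAx,y\rangle=\langle Px,By\rangle+\langle PAx,y\rangle$ from the defining relation of $B^*$; since $\|B^*P+PA\|<\infty$ and $\D_A$ is dense, $B^*P+PA$ extends to a bounded $Q=\A P$, giving the stated identity. Conversely, if such a bounded $Q$ exists, then for fixed $x\in\D_A$ the functional $y\mapsto\langle Px,By\rangle=\langle Qx-PAx,y\rangle$ is bounded on $\D_B$, which is exactly the statement $Px\in\D_{B^*}$ with $B^*Px=Qx-PAx$; hence $P\in\D_\A$ and $\A P=Q$. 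This is the workhorse identity for the rest.

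Next, for part (ii), I would compute the $\text{wo*}$ difference quotient on the dense set: for any bounded $P$ and $x\in\D_A$, $y\in\D_B$, splitting $\langle PT(t)x,S(t)y\rangle-\langle Px,y\rangle=\langle PT(t)x,S(t)y-y\rangle+\langle P(T(t)x-x),y\rangle$ and dividing by $t$ yields the limit $\langle Px,By\rangle+\langle PAx,y\rangle$. For $\A^{\text{wo*}}\subseteq\A$ this already suffices: if $P\in\D_{\A^{\text{wo*}}}$, the limit $\langle\A^{\text{wo*}}Px,y\rangle$ must agree with $\langle Px,By\rangle+\langle PAx,y\rangle$ on $\D_A\times\D_B$, and part (i) turns this into $P\in\D_\A$ with $\A P=\A^{\text{wo*}}P$. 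The reverse inclusion $\A\subseteq\A^{\text{wo*}}$ is the heart of the matter, since the difference quotients of a $C_0^*$ semigroup need not be uniformly norm bounded, so a plain density argument fails. The remedy is the integral representation: for $P\in\D_\A$ with $Q=\A P$, differentiating $s\mapsto\langle PT(s)x,S(s)y\rangle$ (legitimate because $T(s)x\in\D_A$, $S(s)y\in\D_B$) and applying part (i) to the pair $(T(s)x,S(s)y)$ gives $\frac{d}{ds}\langle\T(s)Px,y\rangle=\langle\T(s)Qx,y\rangle$, hence $\langle(\T(t)P-P)x,y\rangle=\int_0^t\langle\T(s)Qx,y\rangle\,ds$. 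Both sides are bounded in $x,y$ uniformly for $s\in[0,t]$ by local boundedness of $T,S$, so the identity extends to all $x\in X$, $y\in Y$ by density; dividing by $t$, the Cesàro average converges to $\langle Qx,y\rangle$ by $\text{wo*}$ continuity of $\T(s)$ at $0$, which gives $P\in\D_{\A^{\text{wo*}}}$ with $\A^{\text{wo*}}P=Q$ for all $x,y$.

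Finally, for part (iii) with $Y$ reflexive, $S^*(t)$ is $C_0$ and $\T(t)$ is strong operator continuous (as recorded just before the statement), so $\frac{d}{ds}[\T(s)Px]=\T(s)Qx$ now holds as a norm-differentiable $Y^*$-valued identity, upgrading the scalar integral to a Bochner integral $\T(t)Px-Px=\int_0^t\T(s)Qx\,ds$, again valid for all $x\in X$ by density. Then $\|\frac1t(\T(t)P-P)x-Qx\|\le\sup_{s\in[0,t]}\|\T(s)Qx-Qx\|\to0$, proving $\A\subseteq\A^{\text{so}}$; the opposite inclusion is free since $\text{so}$ convergence implies $\text{wo*}$ convergence and $\A^{\text{wo*}}=\A$. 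The main obstacle throughout is precisely the passage from the dense set $\D_A\times\D_B$ to all of $X\times Y$, and the integral (respectively Bochner) representation is the device that legitimizes this passage in the absence of norm bounds on the difference quotients.
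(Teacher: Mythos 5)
Your proof is correct, and its overall architecture matches the paper's: part (i) is the definition of the adjoint read in both directions, the hard inclusion $\A\subseteq\A^{\text{wo*}}$ comes from the integral representation $\langle(\T(t)P-P)x,y\rangle=\int_0^t\langle\T(s)(\A P)x,y\rangle\,ds$ followed by a density extension, and (iii) upgrades that scalar identity to a Bochner integral when $Y$ is reflexive. Where you genuinely diverge is in keeping every limit inside the two $C_0$ semigroups $T(t)$ and $S(t)$: you differentiate $s\mapsto\langle PT(s)x,S(s)y\rangle$ with $y\in\D_B$, and you obtain $\A^{\text{wo*}}\subseteq\A$ by evaluating the difference quotient on $\D_A\times\D_B$ and feeding the result into (i). The paper instead differentiates $\langle S^*(t)PT(t)x,y\rangle$ for arbitrary $y\in Y$, and proves $\A^{\text{wo*}}\subseteq\A$ from the operator identity $\frac1h\bigl(S^*(h)-I\bigr)P=\frac1h\bigl(\T(h)P-P\bigr)-S^*(h)\,P\,\frac1h\bigl(T(h)-I\bigr)$; both of those steps lean on adjoint-semigroup theory (weak* differentiability of $S^*(t)$ on $\D_{B^*}$, and the identification of the weak* generator of $S^*(t)$ with $B^*$, cf.\ \cite[3.1]{BR}), which your version avoids entirely, making it more self-contained. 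Two small prices you pay, neither a gap: the pairing computation naturally yields only right derivatives, so your appeal to the fundamental theorem of calculus implicitly uses the standard lemma that a continuous function with a continuous right derivative is $C^1$; and your derivative formula initially holds only for $y\in\D_B$ rather than all $y\in Y$, which is harmless because the integral identity is bilinear and locally bounded in $(x,y)$, hence extends by density in both variables, exactly as you argue.
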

\begin{proof}{\rm(i)} Assume that such $Q$ exists. Let $x\in\D_A$ and 
$z:=Px$, then for any $y\in\D_B$ the definition of $Q$ yields
$$
\langle z,By\rangle=-\langle PAx,y\rangle+\langle Qx,y\rangle=\langle(-PA+Q)x,y\rangle\,.
$$
By definition of the adjoint it follows that $z=Px\in\D_{B^*}$ and $B^*(Px)=-PAx+Qx$. Therefore, $P(\D_A)\subseteq\D_{B^*}$ and 
$\|B^*P+PA\|=\|Q\|<\infty$. Thus, $P\in\D_\A$ and $\A P=Q$. 

Conversely, if $P\in\D_\A$ then $\A P$ is bounded and $\A P=B^*P+PA$ on $\D_A$. Taking $x\in\D_A$ and $y\in\D_B$ we have 
$\langle(\A P)x,y\rangle=\langle Px,Gy\rangle+\langle PAx,y\rangle$, so $Q:=\A P$ exists.

{\rm(ii)} First we prove that $\A\subseteq\A^{\text{wo*}}$. Let $P\in\D_\A$ and $x\in\D_A$, $y\in Y$, then
\begin{align*}
&\frac1h\Big\langle\Bigl(\T(t+h)P-\T(t)P\Bigr)x,y\Big\rangle
=\frac1h\Big\langle\Bigl(S^*(t+h)PT(t+h)-S^*(t)PT(t)\Bigr)x,y\Big\rangle\\
&=\Big\langle S^*(t+h)P\frac1h\Bigl(T(t+h)-T(t)\Bigr)x,y\Big\rangle+\Big\langle\frac1h\Bigl(S^*(t+h)-S^*(t)\Bigr)PT(t)x,y\Big\rangle\\
&=\Big\langle\frac1h\Bigl(T(t+h)-T(t)\Bigr)x,P^*S(t+h)y\Big\rangle+\Big\langle\frac1h\Bigl(S^*(t+h)-S^*(t)\Bigr)PT(t)x,y\Big\rangle\,.\\
\end{align*}
Since $x\in\D_A$ also $T(t)x\in\D_A$, and in the first term $\frac1h\bigl(T(t+h)-T(t)\bigr)x\xrightarrow[h\to0]{}AT(t)x$ by norm. Moreover, $P^*S(t+h)y\xrightarrow[h\to0]{}P^*S(t)y$ by norm since $S(t)$ is a $C_0$ semigroup. Therefore, as $h\to0$ the first term converges to $\langle T(t)Ax,P^*S(t)y\rangle=\langle S^*(t)PT(t)Ax,y\rangle$. Since $T(t)x\in\D_A$ and $P\in\D_\A$ we have $PT(t)x\in\D_{B^*}$. Hence the second term converges to $\langle S^*(t)B^*PT(t)Ax,y\rangle$. Summarizing we see that 
$$
\frac{d}{dt}\langle\T(t)Px,y\rangle
=\langle S^*(t)PAT(t)x,y\rangle+\langle S^*(t)B^*PT(t)x,y\rangle\\
=\langle S^*(t)\big(B^*P+PA\big)T(t)x,y\rangle\,.
$$
Integrating both sides from $0$ to $h$ we obtain,
\begin{equation}\label{LyapGenInt}
\langle(\T(h)P-P)x,y\rangle=\int_0^h\langle S^*(t)\big(B^*P+PA\big)T(t)x,y\rangle\,dt.
\end{equation}
Since $P\in\D_\A$ the operator $B^*P+PA$ is bounded on $\D_A$, and it has a unique bounded extension to $X$, which is its closure $\overline{B^*P+PA}$. The equality in \eqref{LyapGenInt} extends to all $x\in X$ if $B^*P+PA$ is replaced by $\overline{B^*P+PA}$. Dividing both sides by $h$ and passing to limit as $h\to0$ we arrive at $\langle(\A^{\text{wo*}}P)x,y\rangle=\langle(\overline{B^*P+PA})x,y\rangle$ for all $x\in X$, $y\in Y$. So $\D_\A\subseteq\D_{\A^{\text{wo*}}}$ and 
$\A^{\text{wo*}}P=\A P$ on $\D_\A$, i.e. $\A\subseteq\A^{\text{wo*}}$.

Now we prove that $\A^{\text{wo*}}\subseteq\A$. Take $P\in\D_{\A^{\text{wo*}}}$ and consider the elementary identity
$$
\frac1h\Bigl(S^*(h)-I\Bigr)P=\frac1h\Bigl(\T(h)P-P\Bigr)-S^*(h)\,P\,\frac1h\Bigl(T(h)-I\Bigr)\,.
$$
Applying both sides to $x\in\D_A$ and taking the weak limit in $X$ as $h\to0$ we obtain
on the right $(\A^{\text{wo*}} P)x-PAx$\,. Therefore, $Px\in\D_{\B^*}$ and $B^*Px=(\A^{\text{wo*}} P)x-PAx$. Hence  
$P(\D_A)\subseteq\D_{B^*}$ and $\A^{\text{wo*}} P=B^*P+PA$ on $\D_A$. Since $\A^{\text{wo*}}P$ is bounded so is $B^*P+PA$, and since $\D_A$ is dense $\A^{\text{wo*}} P=\overline{B^*P+PA}$ everywhere. Thus, $P\in\D_\A$ and $\A^{\text{wo*}}=\A$ on 
$\D_{\A^{\text{wo*}}}$. Combining the two inclusions we have $\A^{\text{wo*}}=\A$.

{\rm(iii)} When $Y$ is reflexive the weak and the weak* topologies on it coincide, $S^*(t)$ is a $C_0$ semigroup, and its weak generator $B^*$ is the same as its strong generator by a theorem of Yosida \cite[IX.1]{Yo}. Hence we immediately have from (ii) that $\A^{\text{so}}\subseteq\A^{\text{wo*}}\subseteq\A$.

For the converse inclusion note that $x\in\D_\A$ still implies $PT(t)x\in\D_{B^*}$ and \eqref{LyapGenInt} holds. But now $S^*(t)\big(B^*P+PA\big)T(t)x$ is norm continuous, and $(\T(h)P-P)x=\int_0^h S^*(t)\big(B^*P+PA\big)T(t)x\,dt$ is a Bochner integral. Reasoning as in (ii) we now get $\A\subseteq\A^{\text{so}}$ and $\A=\A^{\text{so}}$.
\end{proof}
\noindent When $Y$ is not reflexive $\T(t)$ may not be strong operator continuous, and $\D_{\A^{\text{so}}}$ may be strictly smaller than $\D_\A$. In other words, for some $P\in\D_\A$ the difference quotient $\frac1h(\T(h)P-P)$ may not strong operator converge to $\overline{B^*P+PA}$. 

For the Lyapunov semigroups $X=Y$, $T(t)=S(t)$, and we restrict $\T(t)$ to the subspace of symmetric operators $\B_s(X,X^*)$, a predual to which is the subspace of symmetric tensors.
\begin{definition}\label{symtenprod}
The algebraic symmetric tensor product $X\ohat X$ is the linear span in $X\otimes X$ of tensors of the form $x\otimes y+y\otimes x$, where $x,y\in X$, and the {\sl projective symmetric tensor product $X\ohat_\pi X$} is the closure of $X\ohat X$ in $X\otimes_\pi X$.
\end{definition}
\noindent The projective duality $(X\otimes_\pi Y)^*\simeq\B(X,Y^*)$ \cite[2.1]{Ry}, and the standard formulas for isomorphisms of subspaces and quotients \cite[4.8]{Rud} show that also $(X\ohat_\pi X)^*\simeq\B_s(X,X^*)$. By inspection, Theorem \ref{ComGen} remains true for the restricted semigroups without change. It then follows from Theorem \ref{ComGen}(i) that the duality pairing Riccati equation \eqref{DualParRicc} is equivalent to the Lyapunov generator equation 
$\L_AP+\Phi(P)=0$ in the sense that any solution to one is also a solution to the other. The following property of Lyapunov generators is immediately obvious from their explicit description in the theorem, and will be used in Section \ref{S6}.
\begin{corollary}\label{LyapAdd} Let $X$ be a Banach space, $A$ be a generator of a $C_0$ semigroup on $X$, and $\L_A$ be the corresponding Lyapunov generator. Then for any and $G\in\B(X)$ we have $\L_{A+G}=\L_A+\L_G$, in particular $\D_{\L_{A+G}}=\D_{\L_{A}}$.
\end{corollary}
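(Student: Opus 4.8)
The plan is to read both sides of the identity directly off the explicit description of the Lyapunov generator provided by Theorem \ref{ComGen}. Specialized to the Lyapunov case $X=Y$, $T(t)=S(t)$ (so that $B=A$), that theorem gives $\L_A P=\overline{A^*P+PA}$ on
$$
\D_{\L_A}=\{P\in\B_s(X,X^*)\mid P(\D_A)\subseteq\D_{A^*},\ \|A^*P+PA\|<\infty\}.
$$
Since $G\in\B(X)$ is bounded, $A+G$ again generates a $C_0$ semigroup and the same description applies verbatim to $\L_{A+G}$, whereas $\L_G$ is the everywhere-defined bounded operator $\L_GP=G^*P+PG$ (no closure is needed, since $G^*P+PG\in\B(X,X^*)$ is already globally bounded). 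Thus $\L_A+\L_G$ has domain $\D_{\L_A}\cap\D_{\L_G}=\D_{\L_A}$, and I must match both this domain and the action against those of $\L_{A+G}$.

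First I would dispose of the domains using three elementary facts that follow from boundedness of $G$: that $\D_{A+G}=\D_A$, that $(A+G)^*=A^*+G^*$, and hence that $\D_{(A+G)^*}=\D_{A^*}$. Consequently the membership condition $P(\D_{A+G})\subseteq\D_{(A+G)^*}$ defining $\D_{\L_{A+G}}$ is literally $P(\D_A)\subseteq\D_{A^*}$. Moreover, on $\D_A$ one has the operator identity $(A+G)^*P+P(A+G)=(A^*P+PA)+(G^*P+PG)$ in which the second summand is bounded for every $P$; hence $\|(A+G)^*P+P(A+G)\|<\infty$ if and only if $\|A^*P+PA\|<\infty$. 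Combining these gives $\D_{\L_{A+G}}=\D_{\L_A}$.

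On this common domain the action splits the same way: the unique bounded extension of the sum of an operator bounded on $\D_A$ and a globally bounded operator is the sum of the extensions, so $\overline{(A+G)^*P+P(A+G)}=\overline{A^*P+PA}+(G^*P+PG)$, i.e. $\L_{A+G}P=\L_AP+\L_GP$. The whole argument is routine; the only points deserving explicit justification are the bounded-perturbation fact $(A+G)^*=A^*+G^*$ with $\D_{(A+G)^*}=\D_{A^*}$, and that adjoining a globally bounded operator commutes with taking the closure. Neither is a genuine obstacle, which is precisely why the statement is \emph{immediately obvious} from Theorem \ref{ComGen}.
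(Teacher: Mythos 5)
Your proposal is correct and follows exactly the route the paper intends: the paper offers no separate argument for this corollary, declaring it \emph{immediately obvious} from the explicit description of the generator in Theorem \ref{ComGen}, and your write-up is precisely the elaboration of that remark (bounded perturbation gives $\D_{A+G}=\D_A$, $(A+G)^*=A^*+G^*$ on $\D_{A^*}$, the defining conditions of the domain match term by term, and the closure splits off the globally bounded summand $G^*P+PG$). Nothing is missing; you have simply made explicit the steps the author left to the reader.
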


Our second concern is the continuity of the quadratic part of the Riccati equation. To this end we investigate relations between topology and order in $\B_s(X,X^*)$. When $X=H$ is a Hilbert space $\B_s(X,X^*)=\B_s(H)$ is the familiar space of bounded self-adjoint operators with the cone of positive definite operators defining the partial order. As in the Hilbert case $\B_s(X,X^*)$ is isomorphic to the space of bounded quadratic (or bilinear symmetric) forms on $X$ \cite[2.2]{Ry}. For the quadratic forms the partial order is just the pointwise order. However, in some respects the partial order on $\B_s(X,X^*)$ may behave quite differently than on $\B_s(H)$. 

For one, unless $X$ is isomorphic to a Hilbert space, $\B_s(X,X^*)$ contains no operators that multiplied by a large enough number become larger than any given operator, like the identity $I\in\B_s(H)$. One can show that any such operator $E$ must satisfy $\langle Ex,x\rangle\geq a\|x\|^2$ for some $a\geq0$, and therefore $\langle Ex,x\rangle^{\frac12}$ is an equivalent Hilbert norm on $X$. 

Moreover, $\B_s^+(X,X^*)$ may not be generating, i.e. not every bounded and symmetric operator can be represented as a difference of two positive definite operators. The authors of \cite{Kal} prove that $\B_s^+(X,X^*)$ is generating if and only if every $P\in\B_s(X,X^*)$ factors through a Hilbert space (by the reproducing kernel Hilbert space construction any $P\in\B_s^+(X,X^*)$ always so factors, see \cite[2.1]{GvN} and Section \ref{S4}). An example of non-factorizable $P\in\B_s(l_p,l_p^*)$ for $1<p<2$ is given in \cite{Sari}, hence $\B_s^+(l_p,l_p^*)$ is not generating for such $p$. Moreover, any infinite-dimensional $L_p(\mu)$ contains a complemented copy of $l_p$ \cite[Thm.3.3]{Kal}, so $\B_s^+(L_p(\mu),L_p(\mu)^*)$ is not generating for $1<p<2$ either. However, some properties of the order are shared with $\B_s(H)$, as the next theorem shows.
\begin{theorem}\label{Ls+} Let $\|\cdot\|$ denote the induced operator norm on $\B(X,X^*)$, then  

{\rm(i)} If $P\in\B_s(X,X^*)$ then $\ds{\|P\|=\sup_{\|x\|=1}|\langle Px,x\rangle|}$, hence $\|\cdot\|$ is monotone on 
$\B_s^+(X,X^*)$;

{\rm(ii)} If $P\in\B_s^+(X,X^*)$ then $\|Px\|^2\leq\|P\|\,\langle Px,x\rangle$ for all $x\in X$;

{\rm(iii)} order bounded monotone sequences in $\B_s(X,X^*)$ strong operator converge.

\end{theorem}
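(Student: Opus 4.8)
The plan is to derive all three parts from a single tool, the Cauchy--Schwarz inequality for the positive semidefinite symmetric bilinear form $(x,y)\mapsto\langle Px,y\rangle$ attached to a $P\in\B_s^+(X,X^*)$. Parts (ii) and (iii) will reduce to (i), so the substance lies in (i). Throughout I would write $M(P):=\sup_{\|x\|=1}|\langle Px,x\rangle|$ and recall that the operator norm is $\|P\|=\sup_{\|x\|=\|y\|=1}|\langle Px,y\rangle|$, so the inequality $M(P)\le\|P\|$ is immediate and only the reverse needs an argument.

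For (i), I would first note $\langle Px,x\rangle\le M(P)\|x\|^2$ for all $x$ by homogeneity. For a positive operator the associated form is positive semidefinite and symmetric, so Cauchy--Schwarz gives $|\langle Px,y\rangle|^2\le\langle Px,x\rangle\,\langle Py,y\rangle\le M(P)^2\|x\|^2\|y\|^2$; taking the supremum over unit $x,y$ yields $\|P\|\le M(P)$, hence equality. I expect this reverse inequality to be the main obstacle, and the point where positivity is genuinely needed: the Hilbert-space route through the polarization identity only bounds $|\langle Px,y\rangle|$ by $\tfrac12 M(P)(\|x+y\|^2+\|x-y\|^2)$, and without the parallelogram law this degrades to a factor of $2$, so Cauchy--Schwarz for the positive form is the correct substitute. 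Monotonicity is then immediate: if $0\le P\le R$ then $\langle Px,x\rangle\le\langle Rx,x\rangle$ for every $x$, whence $M(P)\le M(R)$ and $\|P\|\le\|R\|$.

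For (ii), I would use that $Px\in X^*$, so $\|Px\|=\sup_{\|y\|\le1}|\langle Px,y\rangle|$. Applying Cauchy--Schwarz again to the positive form, together with the elementary bound $\langle Py,y\rangle\le\|P\|\,\|y\|^2$, gives $|\langle Px,y\rangle|\le\langle Px,x\rangle^{1/2}\,\|P\|^{1/2}\|y\|$; taking the supremum over $\|y\|\le1$ and squaring produces $\|Px\|^2\le\|P\|\,\langle Px,x\rangle$.

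For (iii), let $(P_n)$ be, without loss of generality, nondecreasing and order bounded, $P_m\le P_n\le R$ for $m\le n$. I would argue in four steps. First, for each fixed $x$ the real sequence $\langle P_nx,x\rangle$ is nondecreasing and bounded above by $\langle Rx,x\rangle$, hence Cauchy. Second, from the monotonicity of the norm in (i), the order bounds $0\le P_n-P_m\le R-P_1$ (valid for $m\le n$) give a uniform estimate $\|P_n-P_m\|\le\|R-P_1\|=:C$. Third, applying (ii) to the positive operator $P_n-P_m$ yields $\|(P_n-P_m)x\|^2\le C\bigl(\langle P_nx,x\rangle-\langle P_mx,x\rangle\bigr)$, so $(P_nx)$ is Cauchy in the complete space $X^*$ and converges to some $Px$. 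Finally, linearity, the bound $\|Px\|\le C'\|x\|$ from the uniform bound on $\|P_n\|$, and symmetry all pass to the pointwise limit, so $P\in\B_s(X,X^*)$ and $P_n\to P$ in the strong operator topology. The only substantive inputs are (i) and (ii); the remainder is the standard monotone-convergence bootstrap transplanted from the self-adjoint Hilbert-space setting.
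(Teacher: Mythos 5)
Your proposal is sound, and on part (i) it takes a genuinely different route from the paper --- one that, as it happens, is the only workable one. The paper's proof of (i) transplants the Hilbert-space argument verbatim: polarization, then the parallelogram bound $\|x+y\|^2+\|x-y\|^2\leq2(\|x\|^2+\|y\|^2)$, then the substitution $y=\frac{\|x\|}{\|Px\|}Px$. Neither step survives in a general Banach space: the parallelogram inequality can reverse (in $l_1$, with $x=(1,0)$, $y=(0,1)$, the left side is $8$ and the right side is $4$), and $Px$ lives in $X^*$, not in $X$, so it cannot occupy the second slot of the duality pairing. Your instinct that positivity is genuinely needed is correct, and more is true: the equality in (i) actually \emph{fails} for merely symmetric operators. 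Take $X=l_1^2$ and the coordinate swap $P(x_1,x_2):=(x_2,x_1)$, which lies in $\B_s(X,X^*)$; then $\sup_{\|x\|_1=1}|\langle Px,x\rangle|=\sup_{|x_1|+|x_2|=1}2|x_1x_2|=\frac12$ while $\|P\|=1$. So part (i) as stated should be read as a claim about $\B_s^+(X,X^*)$; for merely symmetric $P$ polarization only yields $\|P\|\leq2\sup_{\|x\|=1}|\langle Px,x\rangle|$ (your side remark has the right conclusion here, though the polarization constant is $\frac14$ rather than $\frac12$), and the swap example shows this factor $2$ is sharp. The restriction costs nothing: the monotonicity assertion, parts (ii) and (iii), and every later use in the paper (e.g.\ the norm estimates at the end of the proof of Theorem \ref{WonhBan}) invoke (i) only for positive operators or for differences of ordered ones, which is exactly the case your Cauchy--Schwarz argument covers.

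On (ii) and (iii) you follow essentially the paper's strategy, with small simplifications. For (ii) the paper picks an approximate norming vector $g$ with $\|g\|=\|Px\|$ and $\langle Px,g\rangle\geq(1-\varepsilon)\|Px\|^2$, applies Cauchy--Schwarz, and lets $\varepsilon\to0$; you obtain the same inequality by taking the supremum over the unit ball in $\|Px\|=\sup_{\|y\|\leq1}|\langle Px,y\rangle|$, which is the identical estimate minus the $\varepsilon$ bookkeeping. For (iii) the paper first constructs the limit operator: polarization (valid in any Banach space, unlike the parallelogram law) yields weak-operator convergence, norm monotonicity yields a uniform bound, and then (ii) upgrades the convergence to strong operator convergence; you instead apply (ii) to the positive differences $P_n-P_m$ to show that $(P_nx)$ is norm Cauchy in $X^*$, and only afterwards define the limit. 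Your variant also removes the paper's preliminary normalization (``multiplying by $-1$ and adding a lower bound'' to make the sequence positive), since the differences $P_n-P_m$ are positive automatically. Both arguments rest on the same two ingredients, namely (i) for positive operators and (ii); yours packages them slightly more economically.
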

\begin{proof}(i) The proof is analogous to the Hilbert space case, see e.g. \cite[Ch.VI]{RS}. Let $M:=\sup_{\|x\|=1}|\langle Px,x\rangle|$, obviously $M\leq\|P\|$. By symmetricity of $P$ and the parallelogram identity,
$$
4\langle Px,y\rangle=\langle P(x+y),x+y\rangle-\langle P(x-y),x-y\rangle
\leq|\langle P(x+y),x+y\rangle|+|\langle P(x-y),x-y\rangle|\,.
$$
Therefore,
$$
4\langle Px,y\rangle\leq M\,\bigr(\|x+y\|^2+\|x-y\|^2\bigl)\leq2\,M\,\bigr(\|x\|^2+\|y\|^2\bigl)\,.
$$
Cancel $2$ and set $y=\frac{\|x\|}{\|Px\|}Px$ in the last inequality to obtain
$$
2\Bigl\langle Px,\frac{\|x\|}{\|Px\|}Px\Bigr\rangle=2\|Px\|\,\|x\|
\leq M\,\Bigr(\|x\|^2+\frac{\|x\|^2}{\|Px\|^2}\left\|Px\right\|^2\Bigl)=2\,M\,\|x\|^2\,.
$$
Canceling $2\,\|x\|$ we get $\|Px\|\leq M\,\|x\|$ for all $x$, and hence the converse inequality $\|P\|\leq M$ holds. 

Recall that a norm is called monotone if $0\leq P\leq R$ implies $\|P\|\leq\|R\|$. But if $P\leq R$ then $\langle Px,x\rangle\leq\langle Rx,x\rangle$ for all $x\in X$ by definition of order. For positive definite $P$ the absolute value in the formula just proved can be omitted, so taking supremum over all $x$ with $\|x\|=1$ gives $\|P\|\leq\|R\|$.\\

\noindent (ii) This inequality is derived in passing in \cite{Dri}. Consider $\pfi\in X^*$, for any $\e>0$ there is a normalized element $\widetilde{g}\in X$ such that $\langle\pfi,\widetilde{g}\rangle\geq(1-\e)\|\pfi\|$ since $\ds{\|\pfi\|=\sup_{x\in X,\|x\|=1}\langle\pfi,x\rangle}$. Setting $g:=\|\pfi\|\widetilde{g}$ we have $\|g\|=\|\pfi\|$ and $\langle\pfi,g\rangle\geq(1-\e)\|\pfi\|^2$, in other words $g$ approximates in $X$ the tangent functional to $\pfi$ (which in general belongs to $X^{**}$).

Applying the last inequality to $\pfi=Px$ we have $(1-\e)\|Px\|^2\leq\langle Px,g\rangle$. Moreover, if $P\geq0$ then $\langle Px,y\rangle$ is a semi-definite inner product on $X$, so it satisfies the Cauchy-Schwarz inequality $|\langle Px,y\rangle|^2\leq\langle Px,x\rangle\,\langle Py,y\rangle$. Setting $y=g$ and combining the inequalities we obtain
\begin{align*}
(1-\e)^2\|Px\|^4&\leq|\langle Px,g\rangle|^2\leq\langle Px,x\rangle\langle Pg,g\rangle\\
&\leq\|P\|\,\|g\|^2\,\langle Px,x\rangle=\|P\|\,\|Px\|^2\,\langle Px,x\rangle.
\end{align*}
Canceling $\|Px\|^2$ we get $(1-\e)^2\|Px\|^2\leq\|P\|\,\langle Px,x\rangle$ (if $Px=0$ this inequality holds trivially), and since $\e>0$ is arbitrary the desired inequality follows.

\noindent (iii) For any $x\in X$ the sequence $\langle P_n x,x\rangle$ is monotone and bounded, and hence converges by the Weierstrass theorem. By the polarization identity a limit $l_x(y)$ of $\langle P_n x,y\rangle$ exists for any $x,y\in X$. 
Multiplying $P_n$ by $-1$ and adding a lower bound to all of them if necessary we may assume that $P_n$ are positive definite and monotone increasing. Since by (i) the operator norm is monotone the norms of $P_n$ are uniformly bounded, $\|P_n\|\leq M$. This implies that $l_x$ is a continuous functional on $X$, and one can define a linear operator from $X$ to $X^*$ by setting 
$Px:=l_x$. 

By construction, $P_n$ weak operator* converge to $P$. Moreover, by (ii)
\begin{align*}
\|(P_n-P)x\|^2\leq\|P_n-P\|\,\langle(P_n-P)x,x\rangle
\leq 2M\langle(P_n-P)x,x\rangle\xrightarrow[n\to\infty]{}0\,.
\end{align*}
Thus, $P_n$ strong operator converge to $P$.
\end{proof}
Operators in $\B_s(X,X^*)$ can not be composed directly, but any operator $N\in\B_s(X^*,X)$ can mediate composition and define a product on $\B_s(X,X^*)$ by $(P,R)\mapsto PNR$. Since $\|(P_nNR_n-PNR)x\|\leq\|P_n\|\|N\|\|(R_n-R)x\|+\|(P_n-P)NRx\|$ this product is sequentially strong operator continuous. Therefore, we have the following corollary for the nonlinear part of the Riccati equations.
\begin{corollary}\label{PNR} For any $N\in\B_s(X^*,X)$ the product on $\B_s(X,X^*)$ defined by $(P,R)\mapsto PNR$  is continuous on order bounded monotone sequences.
\end{corollary}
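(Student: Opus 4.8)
The plan is to reduce everything to strong operator convergence of the two factor sequences, which Theorem \ref{Ls+}(iii) supplies, and then to control the product by an elementary add-and-subtract estimate. Let $(P_n)$ and $(R_n)$ be order bounded monotone sequences in $\B_s(X,X^*)$. By Theorem \ref{Ls+}(iii) they strong operator converge to limits $P,R\in\B_s(X,X^*)$, and what remains is to show that $P_nNR_n$ strong operator converges to $PNR$.

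First I would record a uniform norm bound on the factors. Since $(P_n)$ is order bounded there are fixed operators with $A\leq P_n\leq B$, so $0\leq P_n-A\leq B-A$; the monotonicity of the operator norm on $\B_s^+(X,X^*)$ from Theorem \ref{Ls+}(i) then gives $\|P_n-A\|\leq\|B-A\|$, whence $\sup_n\|P_n\|\leq M<\infty$. This uniform bound is the one genuinely nontrivial ingredient, and it is precisely where order boundedness (together with part (i)) is used; without it the first term below could fail to vanish. The very convergence of $P_n$ and $R_n$, by contrast, rests on monotonicity through part (iii).

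Finally, fix $x\in X$ and split
$$
P_nNR_nx-PNRx=P_nN(R_n-R)x+(P_n-P)(NRx).
$$
The first summand is bounded by $\|P_n\|\,\|N\|\,\|(R_n-R)x\|\leq M\,\|N\|\,\|(R_n-R)x\|$, which tends to $0$ because $R_n$ strong operator converges to $R$. The second summand is $(P_n-P)$ applied to the single fixed vector $NRx\in X$, hence tends to $0$ since $P_n$ strong operator converges to $P$. Summing the two estimates gives $\|(P_nNR_n-PNR)x\|\to 0$ for every $x\in X$, which is exactly the claimed strong operator continuity of the product on order bounded monotone sequences. The only subtlety worth flagging is that this is sequential continuity with a uniform-boundedness crutch, not continuity in the strong operator topology at large, so the argument genuinely needs the sequences to be order bounded rather than merely strong operator convergent.
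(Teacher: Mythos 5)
Your proof is correct and takes essentially the same route as the paper, which disposes of the corollary with exactly your decomposition $\|(P_nNR_n-PNR)x\|\leq\|P_n\|\,\|N\|\,\|(R_n-R)x\|+\|(P_n-P)NRx\|$ combined with the strong operator convergence supplied by Theorem \ref{Ls+}(iii). Your explicit derivation of the uniform bound $\sup_n\|P_n\|\leq M$ from order boundedness and the norm monotonicity of Theorem \ref{Ls+}(i) merely spells out a detail the paper leaves implicit (it is obtained the same way inside the proof of Theorem \ref{Ls+}(iii)).
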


\section{Concave equations and stabilizing solutions}\label{S3}

We now have the analytic tools to construct solutions to the Riccati equations. However, it turns out that the construction mostly uses order properties, and can be abstracted from the Riccati specifics. What we need is that the equations are quasi-linear with the linear part generating a positive semigroup, and with the non-linear part being concave. This is the generality adopted in this section.

We start by recalling some basic facts about ordered Banach spaces, positive $C_0^*$ semigroups on them, and the order concavity. We also recall some key results of \cite{K2} on abstract Lyapunov equations. They are then used to construct a special class of solutions to quasi-linear concave equations by Newton's iteration. In dimension one Newton's iteration provides monotone convergence of approximations for concave equations not just locally, but globally. We prove a similar 
result in ordered Banach spaces. 

Let $\X$ be a Banach space with predual $\X_*$ partially ordered by a closed proper cone $\X^+$, see \cite[App.2]{Clem}. The predual cone is defined in the usual way $\X_*^+:=\{\pfi\in \X_*\,|\, \langle\pfi,x\rangle\geq0\text{ for all }x\in \X^+\}$. Recall that a $C_0^*$ semigroup $\T(t)$ on $\X$ is a weak* continuous semigroup adjoint to a $C_0$ semigroup 
$\T_*(t)$ on $\X_*$. Its generator $\A$ is the weak* limit of the difference quotients, which is weak* densely defined, weak* closed, and adjoint to the generator $\A_*$ of the predual semigroup $\T_*(t)$ \cite[3.1.2]{BR}. If $\T(t)$ is positive, i.e. preserves $\X^+$, then so is $\T_*(t)$, and vice versa.

We will be studying quasi-linear equations of the form $\A x+\Phi(x)=0$ on $\X^+$, where $\A$ is the generator of a positive $C_0^*$ semigroup, and $\Phi$ is Gateaux differentiable \cite[ch.XVII.1.1]{Kan},\cite[I.2.1]{Vain}, and concave in some sense. Algebraic operator Riccati equations are not the only ones of this form, so are some reaction-diffusion equations in mathematical chemistry for example. With the help of Newton's iteration the problem reduces to solving a sequence of linear equations of the form $\A x+z=0$. Our assumptions about $\A$ and $z$ turn them into abstract analogs of the operator Lyapunov equations. The main technical tool is a criterion that relates solvability of $\A x=-z$ on $\X^+$ to positive invertibility of $-\A$, and to exponential stability of $\T(t)$, if $z$ satisfies a non-degeneracy condition below.
\begin{definition}\label{L1Det} An element $z\in\X^+$ is called an {\sl $L^1$ detector for $\T(t)$ on a subset $\F\subseteq\X_*^+$} if for every $\pfi\in \F$: 
\begin{equation*}\label{L1det}
\int_0^\infty\langle\T_*(t)\pfi,z\rangle\,dt<\infty\implies\int_0^\infty\|\T_*(t)\pfi\|\,dt<\infty\,.
\end{equation*}
A subset $\F\subset \X_*^+$ is called a {\sl stability subset} for a class of semigroups if for every semigroup in the class 
$\int_0^\infty\|\T_*(t)\pfi\|\,dt<\infty$ for all $\pfi\in\F$ implies that the same inequality holds for all $\pfi\in\X_*^+$. 
\end{definition}
\noindent Basically, the detector condition asks that $z$ correctly "detect" asymptotic behavior of $\T_*(t)$ on every element of $\F$. The terminology comes from examples in control theory, see Section \ref{S6}. Readers familiar with order unit 
spaces \cite[App.2]{Clem} will immediately see that any order unit is an $L^1$ detector on all of $\X_*^+$ for any semigroup. But semigroups can have $L^1$ detectors even in spaces without order units. It is also immediate from the definition that if $w\geq z$ and $z$ is an $L^1$ detector, then so is $w$. Note that $L^1$ stability of $\T_*(t)$ on $\F$ does not in general imply $L^1$ stability on the entire $\X_*^+$, even if $\F$ is dense in it. Of course, $\F=\X_*^+$ is a stability subset for all $\C_0^*$ semigroups, but the class of Lyapunov semigroups admits a much smaller stability subset, see Lemma \ref{StabSub}.

We will assume that $\X_*^+$ is generating, i.e. every element in $\X_*$ is a difference of two positive ones. We do not however impose this condition on $\X^+$ keeping in mind our example of $\X=\B_s(X,X^*)$ and $\X_*=X\ohat_\pi X$. If $\int_0^\infty\|\T_*(t)\pfi\|\,dt<\infty$ holds for every $\pfi\in\X_*^+$ and $\X_*^+$ is generating, then it holds for every $\pfi\in\X_*$. Then by the Datko-Pazy theorem \cite[II.1.2.2]{Ben}, \cite[Prop.9.4]{Clem} we have $\|\T_*(t)\|=\|\T(t)\|\leq Me^{-\e t}$ for $M,\e>0$ since $\T_*(t)$ is a $C_0$ semigroup, i.e. $\T(t)$ is exponentially stable. This makes the following result less surprising. We state it for $C_0^*$ semigroups, which is all we need here, but in \cite[Thm.3]{K2} it is proved in greater generality. 
\begin{theorem}\label{Lyappi} Let $\T(t)$ be a positive $C_0^*$ semigroup on $\X$ with the generator $\A$ and an $L^1$ detector $z$ on a stability subset $\F$. If $\X_*^+$ is generating the following conditions are equivalent:

{\rm(i)} $\A x=-z$ has a positive solution $x\in\D_\A\cap \X^+$;

{\rm(ii)} $\T(t)$ is exponentially stable;

{\rm(iii)} $\A$ has a bounded inverse on $\X$ and $-\A^{-1}\geq0$.
\end{theorem}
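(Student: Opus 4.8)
The plan is to establish the cycle of implications (iii) $\Rightarrow$ (i) $\Rightarrow$ (ii) $\Rightarrow$ (iii). The implication (iii) $\Rightarrow$ (i) is immediate: if $-\A^{-1}$ exists and is positive, then $x:=-\A^{-1}z$ lies in $\D_\A$, solves $\A x=-z$, and belongs to $\X^+$ because $z\in\X^+$ and $-\A^{-1}\geq0$. The genuine content is in the other two implications, which exploit the duality between $\A$ and the $C_0$ generator $\A_*$ of the predual semigroup $\T_*(t)$, namely $\langle\A_*\pfi,x\rangle=\langle\pfi,\A x\rangle$ for $\pfi\in\D_{\A_*}$ and $x\in\D_\A$.

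For (i) $\Rightarrow$ (ii), suppose $x\in\D_\A\cap\X^+$ solves $\A x=-z$; the idea is to test the predual orbit against $x$. For $\pfi\in\D_{\A_*}$ the function $t\mapsto\langle\T_*(t)\pfi,x\rangle$ is differentiable, and since $\T_*(t)\pfi\in\D_{\A_*}$,
$$\frac{d}{dt}\langle\T_*(t)\pfi,x\rangle=\langle\T_*(t)\A_*\pfi,x\rangle=\langle\T_*(t)\pfi,\A x\rangle=-\langle\T_*(t)\pfi,z\rangle.$$
Integrating from $0$ to $T$ and using that $\T_*(T)\pfi\in\X_*^+$ pairs nonnegatively with $x\in\X^+$ yields $\int_0^T\langle\T_*(t)\pfi,z\rangle\,dt=\langle\pfi,x\rangle-\langle\T_*(T)\pfi,x\rangle\leq\langle\pfi,x\rangle$, hence $\int_0^\infty\langle\T_*(t)\pfi,z\rangle\,dt\leq\langle\pfi,x\rangle$ for $\pfi\in\D_{\A_*}\cap\X_*^+$. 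I would then extend this bound to every $\pfi\in\X_*^+$ by approximating with $\pfi_\lambda:=\lambda(\lambda-\A_*)^{-1}\pfi\in\D_{\A_*}$, which stays in $\X_*^+$ since the resolvent is a positive integral of the positive semigroup, and passing to the limit via Fatou because the integrand is nonnegative. Now the nondegeneracy hypotheses do the work: the $L^1$ detector property converts $\int_0^\infty\langle\T_*(t)\pfi,z\rangle\,dt<\infty$ into $\int_0^\infty\|\T_*(t)\pfi\|\,dt<\infty$ for every $\pfi\in\F$; the stability-subset property upgrades this to every $\pfi\in\X_*^+$; and since $\X_*^+$ is generating it holds for every $\pfi\in\X_*$. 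The Datko--Pazy theorem then forces $\|\T_*(t)\|=\|\T(t)\|\leq Me^{-\e t}$, which is (ii).

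For (ii) $\Rightarrow$ (iii), exponential stability places $0$ in the resolvent set of both $\A_*$ and $\A$ and gives the Laplace-transform representation $-\A^{-1}=\int_0^\infty\T(t)\,dt$ as a bounded operator, the integral converging because $\|\T(t)\|\leq Me^{-\e t}$. Positivity of $-\A^{-1}$ then follows from positivity of $\T(t)$: for $x\in\X^+$ and $\pfi\in\X_*^+$ one has $\langle\pfi,-\A^{-1}x\rangle=\int_0^\infty\langle\T_*(t)\pfi,x\rangle\,dt\geq0$, so $-\A^{-1}$ carries $\X^+$ into the cone dual to $\X_*^+$, which coincides with $\X^+$ when the latter is weak* closed, as it is in the Riccati application.

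I expect the main obstacle to be (i) $\Rightarrow$ (ii), and within it the precise interplay of the three notions. The differentiation-and-integration step is routine once the adjoint relation is invoked, but care is needed both in extending the integral bound from the core $\D_{\A_*}\cap\X_*^+$ to all of $\X_*^+$ \emph{without} a generating assumption on $\X^+$, and in checking that the detector condition together with the stability-subset condition is exactly what bridges from finiteness of $\int_0^\infty\langle\T_*(t)\pfi,z\rangle\,dt$ to genuine $L^1$ stability of the predual semigroup on all of $\X_*$.
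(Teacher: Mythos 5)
Your proof is correct, but there is no in-paper proof to compare it with: the paper only states Theorem \ref{Lyappi} and defers its proof to \cite[Thm.3]{K2}, where it is proved in greater generality. Your argument is a correct, self-contained substitute, and it follows exactly the route the surrounding text hints at: the differentiation identity $\frac{d}{dt}\langle\T_*(t)\pfi,x\rangle=-\langle\T_*(t)\pfi,z\rangle$, giving $\int_0^\infty\langle\T_*(t)\pfi,z\rangle\,dt\leq\langle\pfi,x\rangle$ on $\D_{\A_*}\cap\X_*^+$; the resolvent-plus-Fatou extension of this bound to all of $\X_*^+$; and then the detector, stability-subset, and generating-cone steps feeding into the Datko--Pazy theorem, a chain that the paper spells out almost verbatim in the paragraph preceding the theorem. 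Two minor points. In (ii)$\Rightarrow$(iii) it is cleaner to build the inverse on the predual side, $-\A_*^{-1}=\int_0^\infty\T_*(t)\,dt$ as a Bochner integral, and take $\A^{-1}=(\A_*^{-1})^*$, since the operator $\int_0^\infty\T(t)\,dt$ you write down only exists as a weak* integral; your positivity computation is already phrased through the predual pairing, so this is cosmetic. Second, your caveat that concluding $-\A^{-1}\geq0$ requires the dual cone of $\X_*^+$ to coincide with $\X^+$ (i.e.\ $\X^+$ weak* closed) is well placed, but it is not an extra hypothesis relative to the paper: the paper's standing assertion that positivity of $\T_*(t)$ implies positivity of $\T(t)$ (``and vice versa'') already presupposes exactly this bipolar identification, and it holds in the Riccati application because $\B_s^+(X,X^*)$ is cut out by the weak* continuous functionals $P\mapsto\llangle x\otimes x,P\rrangle$.
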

\noindent This is an abstract version of Wonham's stability criterion for Lyapunov equations. The most non-trivial implication is from $\rm(i)$ to  $\rm(ii)$. When using the theorem the hardest part to verify is the detector condition. One way is to use continuous final observability that implies $L^1$ detectability \cite[Thm.2]{K2}, and can often be established in applications by priori estimates, see e.g. \cite[3.27]{CP}.

For matrices there is a notion of matrix concavity/convexity \cite{Ando}, and it was used implicitly by Kleinman in the original proof of monotone convergence of approximations \cite{Kl} to solutions to algebraic matrix Riccati equations. Its use is made explicit for a larger class of matrix equations in \cite{Damm}, and Zabczyk generalized Kleinman's implicit approach to operator Riccati equations in Hilbert spaces \cite{Zb0}. Ordinary and matrix concavity are particular cases of order concavity defined next.
\begin{definition}\label{OrdConc} Let $\X$ be an ordered Banach space. A map $\Phi:\X\supseteq D\to\X$ is called {\sl order concave} if for all $\alpha\in[0,1]$ and $x,y\in\X^+$, we have $\Phi(\alpha x+(1-\alpha)y)\geq\alpha\Phi(x)+(1-\alpha)\Phi(y)$.
\end{definition}
\noindent If $D$ is a convex set (in the usual sense) and $\Phi$ is Gateaux differentiable one can prove as in the one dimensional case that order concavity is equivalent to $\Phi(y)-\Phi(x)\leq\Phi'(x)(y-x)$ for all $x,y\in D$, a condition which is often simpler to verify. The Riccati map $\Phi(P)=-PNP+Q$ is Gateaux differentiable, and when $N\geq0$, order concave on $\B_s(X,X^*)$, see Section \ref{S6}.

In the rest of this section $\A$ denotes the generator of a positive $C_0^*$ semigroup on $\X$ with the domain $\D_\A$, and $\Phi:\X^+\to\X$ is an order concave Gateaux differentiable map. We set $F(x):=\A x+\Phi(x)$ for $x\in\D_\A$, and for all $x,y\in\X^+$ we denote for convenience:
\begin{align}\label{F'LPsi}
F'(x)&:=\A+\Phi'(x)\notag\\ L(x)&:=\Phi(x)-\Phi'(x)x\\ \Psi(x,y)&:=\Phi(x)-\Phi(y)+\Phi'(x)(y-x).\notag
\end{align}
One can see that for $x\in\D_\A$ changing $\Phi$ to $F$ in the definitions of $L$ and $\Psi$ does not change the result. Here $\Psi$ measures deviation of $\Phi$ from its tangent, and in dimension one $L(x)=\Phi(0)+\Psi(x,0)$ is closely related to the Legendre transform of $F$ and $\Phi$. The order concavity is equivalent to $\Psi(x,y)\geq0$ for all $x,y\in D$, so if $0\in D$ then $L(x)\geq\Phi(0)$ for all $x\in D$.

Newton's iteration is a classical method for solving nonlinear equations of the form $F(x)=0$, see e.g. \cite[ch.XVIII]{Kan}. By construction, the iteration equation is $F(x_n)+F'(x_n)(x_{n+1}-x_n)=0$. It can be rewritten as
$F'(x_n)x_{n+1}=-L(x_{n+1})$, which is of the form $\A x=-z$. The next lemma describes the effect of performing one step of Newton's iteration in our setting.
\begin{lemma}[Iteration step]\label{Iter} Assume that $\X_*$ is generating and $F'(u):=\A+\Phi'(u)$ generates a positive $C_0^*$ semigroup $\T_{F'(u)}(t)$ for every $u\in\X^+$. Suppose $\Phi(0)\geq0$ and for some $x\in\X^+$ the semigroup $\T_{F'(x)}(t)$ is exponentially stable. Then:

{\rm(i)} There is a unique solution $y\in\D_\A\cap\X^+$ to $F'(x)y=-L(x)$;

{\rm(ii)} $F(y)=-\Psi(x,y)\leq0$;

{\rm(iii)} If moreover $y\in\D_\A\cap\X^+$ and $F(x)\leq0$, then $y\leq x$;

{\rm(iv)} If moreover $L(y)$ is an $L^1$ detector for $\T_{F'(y)}(t)$ then $\T_{F'(y)}(t)$ is exponentially stable.
\end{lemma}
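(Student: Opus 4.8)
The plan is to obtain parts (i)--(iii) as essentially algebraic consequences of the equation defining $y$ together with the positivity of $-F'(x)^{-1}$, and to reserve the real analytic work for part (iv).

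For (i), I would first note that since $\T_{F'(x)}(t)$ is positive and exponentially stable, its generator $F'(x)$ is boundedly invertible with $-F'(x)^{-1}=\int_0^\infty\T_{F'(x)}(t)\,dt\geq0$; this is the elementary implication (ii)$\Rightarrow$(iii) of Theorem \ref{Lyappi}, valid for any exponentially stable positive $C_0^*$ semigroup. Because $\Phi'(x)$ is bounded we have $\D_{F'(x)}=\D_\A$, so the unique solution $y=-F'(x)^{-1}L(x)$ automatically lies in $\D_\A$. To place $y$ in $\X^+$ I would check $L(x)\geq0$: using the identity $L(x)=\Phi(0)+\Psi(x,0)$ noted before the lemma, together with the hypothesis $\Phi(0)\geq0$ and order concavity ($\Psi\geq0$), we get $L(x)\geq0$, and applying the positive operator $-F'(x)^{-1}$ gives $y\geq0$.

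Parts (ii) and (iii) are then direct computations. Substituting $F'(x)y=-L(x)$, i.e. $\A y=\Phi'(x)(x-y)-\Phi(x)$, into $F(y)=\A y+\Phi(y)$ and simplifying yields $F(y)=\Phi(y)-\Phi(x)+\Phi'(x)(x-y)=-\Psi(x,y)$, which is $\leq0$ by concavity, proving (ii). For (iii) I would compute $F'(x)(x-y)=F'(x)x+L(x)=\A x+\Phi(x)=F(x)$, so that $x-y=-(-F'(x)^{-1})F(x)\geq0$ since $-F'(x)^{-1}\geq0$ and $F(x)\leq0$; hence $y\leq x$.

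Part (iv) is the crux and the main obstacle. The natural route is to apply Theorem \ref{Lyappi} to the generator $F'(y)$ with detector $L(y)$, but its hypothesis (i) requires an \emph{exact} positive solution of $F'(y)w=-L(y)$, whereas I only have the \emph{supersolution} $y$: using $\A y=F(y)-\Phi(y)$ one finds $F'(y)y=F(y)-L(y)\leq-L(y)$ because $F(y)\leq0$. An exact solution would presuppose the very stability we are trying to establish, so the circularity forces me to reprove by hand the $L^1$ estimate that underlies the hard direction of Theorem \ref{Lyappi}, starting from the supersolution. For $\pfi\in\X_*^+$ I set $g(t):=\langle\pfi,\T_{F'(y)}(t)y\rangle$; since $y\in\D_\A=\D_{F'(y)}$ the function $g$ is differentiable, and using positivity of $\T_{F'(y)}(t)$ and of $\pfi$ together with $F'(y)y\leq-L(y)$ gives $g'(t)\leq-\langle\pfi,\T_{F'(y)}(t)L(y)\rangle\leq0$. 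Integrating and discarding the nonnegative boundary term $\langle\pfi,\T_{F'(y)}(T)y\rangle$ produces the a priori bound $\int_0^\infty\langle\T_{F'(y)*}(t)\pfi,L(y)\rangle\,dt\leq\langle\pfi,y\rangle<\infty$ for every $\pfi\in\X_*^+$. The $L^1$ detector property of $L(y)$ then upgrades this to $\int_0^\infty\|\T_{F'(y)*}(t)\pfi\|\,dt<\infty$ on the stability subset, the stability subset property extends it to all of $\X_*^+$, and the Datko--Pazy theorem (using that $\X_*^+$ is generating, exactly as in the discussion preceding Theorem \ref{Lyappi}) finally yields exponential stability of $\T_{F'(y)}(t)$. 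I expect the only genuinely delicate points to be justifying the differentiation of $g$ (which is why $y\in\D_{F'(y)}$ matters) and the legitimacy of dropping the boundary term (which uses positivity of both $y$ and the semigroup); the rest is bookkeeping.
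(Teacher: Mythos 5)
Your parts (i)--(iii) coincide with the paper's proof step for step, and they are correct. Part (iv) is also correct, but here you take a genuinely different route, and your worry about ``circularity'' is unfounded: the paper sidesteps it with a purely algebraic trick. Setting $x=y$ in the identity underlying (ii) (i.e.\ using $\Psi(y,y)=0$) gives the \emph{exact} Lyapunov equation $F'(y)y=-\bigl(L(y)-F(y)\bigr)$, whose right-hand side differs from $-L(y)$ by the positive term $-F(y)$. Since $L(y)-F(y)\geq L(y)\geq0$ and the $L^1$ detector property is inherited upward (the remark right after Definition \ref{L1Det}: if $w\geq z$ and $z$ is a detector, so is $w$), the element $L(y)-F(y)$ is itself an $L^1$ detector for $\T_{F'(y)}(t)$, and hypothesis (i) of Theorem \ref{Lyappi} is met by $y$ with this modified right-hand side --- no prior knowledge of stability is needed. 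Your alternative is to keep the inequality $F'(y)y\leq-L(y)$ and re-derive the hard implication of Theorem \ref{Lyappi} by hand for \emph{supersolutions}: differentiating $g(t)=\langle\pfi,\T_{F'(y)}(t)y\rangle$ (legitimate since $y\in\D_\A=\D_{F'(y)}$ and the semigroup is $C_0^*$), discarding the nonnegative boundary term, invoking the detector property on the stability subset, and finishing with Datko--Pazy. This is sound, and it even yields a marginally stronger statement (positive supersolutions of the Lyapunov inequality already force stability), which is of independent interest; but it duplicates analysis that Theorem \ref{Lyappi} was designed to encapsulate, and it obliges you to re-verify the analytic fine points (differentiability of $g$, passage $T\to\infty$, the generating-cone reduction) that the paper's one-line reduction avoids entirely.
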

\begin{proof}{\rm(i)} By assumption we have $L(x)\geq\Phi(0)\geq0$. Since $F'(x)$ is stable $-F'(x)^{-1}\geq0$ by Theorem \ref{Lyappi}, and $y=-F'(x)^{-1}L(x)\in\D_{F'(x)}\cap\X^+=\D_\A\cap\X^+$.

{\rm(ii)} By definition of $F$ and the equation for $y$ we compute $F(y)=F'(x)y+L(x)-\Psi(x,y)=-\Psi(x,y)\leq0$. 

{\rm(iii)} Since $x\in\D_\A$ the equation for $y$ is equivalent to $F(x)+F'(x)(y-x)=0$. And since $-F'(x)^{-1}\geq0$ by Theorem \ref{Lyappi}, $y-x=-F'(x)^{-1}F(x)\leq0$.

{\rm(iv)} Setting $y=x$ in the identity from {\rm(ii)} we obtain  $F'(y)y=-(L(y)-F(y))$. This is an abstract Lyapunov equation to which $y$ is a solution. Since $-F(y)\geq0$ by {\rm(ii)} and $L(y)$ is an $L^1$ detector $L(y)-F(y)$ is also an $L^1$ detector. Since $y\geq0$ our equation has a positive solution fulfilling part {\rm(i)} of Theorem \ref{Lyappi}. Part {\rm(iii)} of that theorem gives the desired conclusion.
\end{proof}
One can see that in conditions of Lemma  \ref{Iter} the assumptions imposed on $x$ are reproduced and even improved for the next approximation $y$. Therefore, assuming a suitable initial guess $x_0$ can be found, we can generate a monotone decreasing positive sequence $x_n$ that (hopefully) converges to a solution to $F(x)=0$. One kind of convergence already follows from our assumptions. If $\X_*$ is generating then monotone bounded sequences always weak* converge in $\X$. Indeed, numerical sequences $\langle\pfi,x_n\rangle$ converge by the Weierstrass theorem for any $\pfi\in\X_*^+$, and therefore for any $\pfi\in\X_*$. So $x_n$ is weak* Cauchy. Since bounded sets in the dual to a Banach space are weak* precompact by the Alaoglu theorem we conclude that $x_n$ weak* converge to some $x_\infty\in\X^+$. 

However, weak* topology is so weak that nonlinear maps of interest are rarely continuous in it. Fortunately, monotone sequences can often be proved to converge in a stronger sense, in which the relevant maps are continuous, see Corollary \ref{PNR}.
\begin{definition}\label{OrdConc} A map $\Theta:\X\supseteq D\to\X$ is called {\sl continuous on bounded monotone sequences} if for any bounded monotone sequence $x_n$ with weak* limit $x$ the values $\Theta(x_n)$ converge to $\Theta(x)$ as $n\to\infty$.
\end{definition}
To simplify the statement of the following theorem it is convenient to introduce some terminology. The motivation comes from control theory and is explained in Section \ref{S6}.
\begin{definition}\label{StabDet} We say that an element $x\in\X^+$ {\sl stabilizes} $(\A,\Phi)$ if the operator 
$F'(x):=\A+\Phi'(x)$ generates an exponentially stable positive $C_0^*$ semigroup. A pair $(\A,\Phi)$ is called 
{\sl exponentially stabilizable} if some $x\in\X^+$ stabilizes it. A pair $(\A,\Phi)$ is called {\sl globally $L^1$ detectable} if for every $x\in\X^+$ the element $L(x):=\Phi(x)-\Phi'(x)x$ is an $L^1$ detector for $\T_{F'(x)}(t)$ 
on a stability subset $\F$ (we suppress the dependence on $\F$ in the name). 
\end{definition}
The next theorem is an abstract version of our main result.
\begin{theorem}\label{NewtRicc} Let a pair $(\A,\Phi)$ be exponentially stabilizable and globally $L^1$ detectable, 
$\Phi(0)\geq0$ and $x\mapsto\Phi(x)$, $(x,y)\mapsto\Phi'(x)y$ be continuous on order bounded monotone sequences. Pick 
$x_0\in\X^+$ so that $\T_{F'(x_0)}(t)$ is exponentially stable. Then the sequence defined recursively by $F(x_n)+F'(x_n)(x_{n+1}-x_n)=0$ is well-defined, belongs to $\D_A\cap\X^+$, and is monotone decreasing for $n\geq1$. It weak* converges to a unique positive solution to $F(x)=0$, and this solution is stabilizing.
\end{theorem}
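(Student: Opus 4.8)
The plan is to iterate Lemma \ref{Iter} indefinitely, extract a weak* limit of the resulting monotone sequence, and promote that limit to a genuine solution using the weak* closedness of the $C_0^*$ generator $\A$. First I would show by induction that the recursion is well defined and that each iterate reproduces the hypotheses of Lemma \ref{Iter}. Rewriting $F(x_n)+F'(x_n)(x_{n+1}-x_n)=0$ in the Lyapunov form $F'(x_n)x_{n+1}=-L(x_n)$ and using $\Phi(0)\geq0$ (so that $L(x_n)\geq\Phi(0)\geq0$) together with the inductive exponential stability of $\T_{F'(x_n)}(t)$, Lemma \ref{Iter}(i) yields a unique $x_{n+1}\in\D_\A\cap\X^+$. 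Lemma \ref{Iter}(ii) gives $F(x_{n+1})=-\Psi(x_n,x_{n+1})\leq0$, and Lemma \ref{Iter}(iv), whose detector hypothesis is exactly global $L^1$ detectability applied to $L(x_{n+1})$, shows that $\T_{F'(x_{n+1})}(t)$ is again exponentially stable. This closes the induction starting from the chosen $x_0$.

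Next I would establish monotonicity and weak* convergence. For $n\geq1$ we have $F(x_n)\leq0$, so Lemma \ref{Iter}(iii) applied with $x=x_n$, $y=x_{n+1}$ gives $x_{n+1}\leq x_n$; thus $(x_n)_{n\geq1}$ is monotone decreasing and order bounded in $[0,x_1]$. Since $\X_*^+$ is generating, $\langle\pfi,x_n\rangle$ converges for every $\pfi\in\X_*^+$ and hence for every $\pfi\in\X_*$, so $x_n$ is weak* Cauchy and, by Alaoglu, weak* converges to some $x_\infty\in\X^+$ (the cone being weak* closed in our setting).

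The crux is to pass to the limit, and here the unboundedness of $\A$ is the main obstacle: I cannot take limits termwise in $F(x_\infty)=\A x_\infty+\Phi(x_\infty)$. Instead I rearrange the iteration identity as $\A x_{n+1}=F(x_{n+1})-\Phi(x_{n+1})=-\Psi(x_n,x_{n+1})-\Phi(x_{n+1})$. Continuity on order bounded monotone sequences gives $\Phi(x_n)\to\Phi(x_\infty)$ and $\Phi(x_{n+1})\to\Phi(x_\infty)$, and, because $(x_n)$ and $(x_{n+1})$ are monotone with the same weak* limit, $\Phi'(x_n)(x_{n+1}-x_n)=\Phi'(x_n)x_{n+1}-\Phi'(x_n)x_n\to0$; hence $\Psi(x_n,x_{n+1})\to0$ and $\A x_{n+1}\to-\Phi(x_\infty)$ weak*. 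Since $\A$ generates a $C_0^*$ semigroup it is weak* closed, so from $x_{n+1}\to x_\infty$ and $\A x_{n+1}\to-\Phi(x_\infty)$ weak* I conclude $x_\infty\in\D_\A$ and $\A x_\infty=-\Phi(x_\infty)$, i.e. $F(x_\infty)=0$. This weak* closedness step is precisely what the $C_0^*$ framework was built to deliver.

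Finally I would settle stability and uniqueness. Any positive solution $x\in\D_\A\cap\X^+$ satisfies $F'(x)x=\A x+\Phi'(x)x=-\Phi(x)+\Phi'(x)x=-L(x)$ with $L(x)\geq\Phi(0)\geq0$ an $L^1$ detector for $\T_{F'(x)}(t)$ by global $L^1$ detectability; Theorem \ref{Lyappi}, implication (i)$\Rightarrow$(ii), then forces $\T_{F'(x)}(t)$ to be exponentially stable, so every positive solution---$x_\infty$ in particular---is stabilizing. For uniqueness, if $x$ and $x'$ are positive solutions then order concavity of $F$ gives $0=F(x')-F(x)\leq F'(x)(x'-x)$, and since $x$ is stabilizing Theorem \ref{Lyappi}(iii) provides $-F'(x)^{-1}\geq0$, whence $x'-x=F'(x)^{-1}\bigl(F'(x)(x'-x)\bigr)\leq0$; by symmetry $x\leq x'$, so $x=x'$.
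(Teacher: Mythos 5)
Your proof is correct and follows essentially the same route as the paper's: generate the iterates via Lemma \ref{Iter}, extract a weak* limit of the monotone (for $n\geq1$) sequence, pass to the limit using the continuity hypotheses and the weak* closedness of the $C_0^*$ generator $\A$, and obtain stabilization and uniqueness from Theorem \ref{Lyappi} together with order concavity. The only differences are cosmetic: you write the iteration as $F'(x_n)x_{n+1}=-L(x_n)$ (incidentally correcting the paper's slip $-L(x_{n+1})$ in the discussion preceding Lemma \ref{Iter}) and route the limit identity through $\Psi(x_n,x_{n+1})$, which expands to exactly the rearrangement used in the paper.
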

\begin{proof}\underline{Existence:} By solving the iteration equation and applying Lemma \ref{Iter} we produce $x_n\in\D_A\cap\X^+$ with $x_1\geq x_2\geq\dots\geq0$. The iteration equation can be rewritten as $\A x_{n+1}=-\Phi(x_n)+\Phi'(x_n)x_n-\Phi'(x_n)x_{n+1}.$
Since $x_n$ is a bounded monotone sequence there is a weak* limit $x_n\xrightarrow[n\to\infty]{}x_\infty\in\X^+$. By continuity assumptions on $\Phi$: 
$$
\A x_{n+1}\xrightarrow[n\to\infty]{}-\Phi(x_\infty)+\Phi'(x_\infty)x_\infty-\Phi'(x_\infty)x_\infty=-\Phi(x_\infty).
$$
Since $\A$ generates a $C_0^*$ semigroup it is weak* closed, so $x_\infty\in\D_A$ and $\A x_\infty=-\Phi(x_\infty)$.
Thus, $F(x_\infty)=\A x_\infty+\Phi(x_\infty)=0$, and $x_\infty$ is a solution.
\medskip

\underline{Stabilization:} Let $x\in\D_A\cap\X^+$ be any positive solution to $F(x)=0$, then 
$F'(x)x=-(0-F'(x)x)=-(F(x)-F'(x)x)=-L(x)$. Since $L(x)$ is an $L^1$ detector for $F'(x)$ by global detectability, we conclude from Theorem \ref{Lyappi} that $\T_{F'(x_\infty)}(t)$ is exponentially stable. Thus $x$, and $x_\infty$ in particular, is stabilizing.
\medskip

\underline{Uniqueness:} For any positive solution $x$ we have by concavity $0=F(x)-F(x_\infty)\leq F'(x_\infty)(x-x_\infty)$. Since $\T_{F'(x_\infty)}(t)$ is positive and exponentially stable $-F'(x_\infty)^{-1}\geq0$. Therefore, $x-x_\infty\leq-F'(x_\infty)^{-1}0=0$. Switching the roles of $x$ and $x_\infty$ we also get $x_\infty-x\leq0$, and $x=x_\infty$.
\end{proof}
\noindent The condition $F(0)=\Phi(0)\geq0$ can be somewhat relaxed to $F(x)\geq0$ having a solution $\theta\in\X^+$ and a stabilizing $x_0$ existing with $x_0\geq\theta$. This case reduces to ours by passing to $\widetilde{F}(x):=A(x+\theta)+\Phi(x+\theta)$, which satisfies the conditions of Theorem \ref{NewtRicc}. If $\widetilde{x}_\infty$ is a solution to $\widetilde{F}(x)=0$ then $x_\infty:=\widetilde{x}_\infty+\theta$ is a solution to the original $F(x)=0$. In this case $x_\infty$ is still a unique stabilizing positive solution, but other positive solutions may exist. They are smaller than or incomparable to 
$\theta$. Since any of them can be chosen in place of $\theta$ we conclude that $x_\infty$ is also the largest positive solution to $F(x)=0$, and even to $F(x)\geq0$. Such characterization of the stabilizing solution is also valid for stochastic matrix Riccati equations \cite{Damm}.

\section{Stabilizability and detectability}\label{S6}

In this section we prove our main result by expressing the conditions of the abstract Theorem \ref{NewtRicc} explicitly for the Lyapunov semigroups, and then verifying them. We also recall stabilizability and detectability conditions from control theory, and explain their relation to the abstract versions from the previous section.

One of equivalent ways to define detectability \cite[3.6]{Wh} in finite dimensional spaces is to call a matrix pair $(C,A)$ detectable if for every $x$: $Ce^{tA}x\xrightarrow[t\to\infty]{}0$ implies $e^{tA}x\xrightarrow[t\to\infty]{}0$. In infinite dimensions different types of convergence to $0$ are no longer equivalent, so many generalizations are possible, ours is one of them, cf. \cite[II.1.2.2]{Ben}. 
\begin{definition}\label{detL2} Let $X$ be a Banach space and $T(t)$ be a $C_0$ semigroup on it with the generator $A$. Let $V$ be a Hilbert space and $C:X\to V$ be a bounded operator. The pair {\sl $(C,A)$ is called detectable in $L^2$} if for all
$x\in X$ 
$$
\int_0^\infty\|CT(t)x\|^2\,dt<\infty\implies\int_0^\infty\|T(t)x\|^2\,dt<\infty\,.
$$
\end{definition}
Recall that any Lyapunov semigroup is adjoint to $\T_*(t)(x\otimes x)=T(t)x\otimes T(t)x$. It follows that $\|\T(t)\|=\|\T_*(t)\|=\|T(t)\|^2$ and exponential stability of $T(t)$ is equivalent to that of $\T(t)$. A crucial observation is that $(X\ohat_\pi X)^+$ has a very simple stability subset for Lyapunov semigroups as a consequence of the Datko-Pazy theorem  \cite[II.1.2.2]{Ben}, \cite[Prop.9.4]{Clem}. The theorem states that for $C_0$ semigroups if $\int_0^\infty\|T(t)x\|^p\,dt<\infty$ for some $p>1$ and all $x\in X$ then $T(t)$ is exponentially stable.
\begin{lemma}\label{StabSub} The set of tensor squares $\F:=\{x\otimes x\big|\,x\in X\}$ is a $\T_*(t)$ invariant stability subset of $(X\ohat_\pi X)^+$ for the class of Lyapunov semigroups. Moreover, $\int_0^\infty\|\T_*(t)\rho\|\,dt<\infty$ for all $\rho\in\F$ implies even exponential stability of $\T_*(t)$. 
\end{lemma}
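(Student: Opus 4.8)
The plan is to reduce the whole statement to one norm computation followed by a single application of the Datko--Pazy theorem at exponent $p=2$. The exponential stability of $\T_*(t)$ is in fact the stronger of the two conclusions, and the stability-subset property will drop out of it for free.

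First I would record invariance, which is immediate: since $\T_*(t)$ acts by $\T_*(t)(x\otimes x)=T(t)x\otimes T(t)x$ and $T(t)x\in X$, the image of a tensor square is again a tensor square, so $\T_*(t)\F\subseteq\F$ for all $t\geq0$. The key identity is then $\|\T_*(t)(x\otimes x)\|=\|T(t)x\|^2$, which follows from $\T_*(t)(x\otimes x)=T(t)x\otimes T(t)x$ together with the projective-norm relation $\|u\otimes u\|=\|u\|^2$ (Definition \ref{tenprod}); this value is unchanged upon passing to the subspace $X\ohat_\pi X$, whose norm is induced from $X\otimes_\pi X$. Consequently
\[
\int_0^\infty\|\T_*(t)(x\otimes x)\|\,dt=\int_0^\infty\|T(t)x\|^2\,dt,
\]
so the hypothesis that $\int_0^\infty\|\T_*(t)\rho\|\,dt<\infty$ for every $\rho\in\F$ says precisely that $\int_0^\infty\|T(t)x\|^2\,dt<\infty$ for every $x\in X$.

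Now I would apply the Datko--Pazy theorem with $p=2$ to the $C_0$ semigroup $T(t)$ on $X$: finiteness of these $L^2$ integrals for all $x$ forces $T(t)$ to be exponentially stable, $\|T(t)\|\leq Me^{-\varepsilon t}$. Since $\|\T_*(t)\|=\|T(t)\|^2\leq M^2e^{-2\varepsilon t}$, the predual semigroup $\T_*(t)$ is itself exponentially stable, which is the ``Moreover'' assertion. The stability-subset claim then follows at once: exponential stability yields $\int_0^\infty\|\T_*(t)\pfi\|\,dt\leq\|\pfi\|\int_0^\infty\|\T_*(t)\|\,dt<\infty$ for every $\pfi\in(X\ohat_\pi X)^+$ (indeed for every $\pfi$), which is exactly the defining property of a stability subset.

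The conceptual crux, rather than any genuine technical obstacle, is recognizing that $L^2$-type integrability of $\T_*(t)$ on tensor squares is exactly Datko--Pazy data at exponent $p=2$ for the underlying semigroup; this is why the very small set $\F$ of tensor squares already controls the asymptotics of $\T_*(t)$ on the entire positive cone. The only computation needing a second glance is the projective-norm identity $\|u\otimes u\|=\|u\|^2$ and its persistence on the symmetric subspace, but this is standard and is already invoked implicitly elsewhere in the paper.
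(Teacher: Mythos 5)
Your proposal is correct and follows essentially the same route as the paper: invariance is immediate from the definition, the projective-norm identity converts the hypothesis into $\int_0^\infty\|T(t)x\|^2\,dt<\infty$ for all $x$, and the Datko--Pazy theorem at $p=2$ gives exponential stability of $T(t)$, hence of $\T_*(t)$ via $\|\T_*(t)\|=\|T(t)\|^2$. Your write-up merely makes explicit two points the paper leaves implicit (the norm identity and the final integration step verifying the stability-subset definition), which is fine.
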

\begin{proof} The invariance is obvious from the definition of $\T_*(t)$. And if $\int_0^\infty\|\T_*(t)(x\otimes x)\|\,dt=\int_0^\infty\|T(t)x\otimes T(t)x\|\,dt=\int_0^\infty\|T(t)x\|^2\,dt<\infty$ for all $x\in X$ then by the Datko-Pazy theorem $T(t)$, and hence $\T_*(t)$, is exponentially stable.
\end{proof}
\noindent Setting $R:=C^*C$ we can rewrite the definition of $L^2$ detectability as 
\begin{equation}\label{RdetL2}
\int_0^\infty\llangle\T_*(t)(x\otimes x),R\rrangle\,dt<\infty\implies\int_0^\infty\|\T_*(t)(x\otimes x)\|^2\,dt<\infty\,.
\end{equation}
In other words, $L^2$ detectability of $(C,A)$ is equivalent to $R$ being an $L^1$ detector in the abstract sense of Definition \ref{L1Det} for the Lyapunov semigroup of 
$T(t)$ on the stability subset of tensor squares. We will also say that $R$ is an $L^2$ detector for the generator $A$ of $T(t)$.

But for $(\L_A,\Phi)$ to be globally $L^1$ detectable in the sense of Definition \ref{StabDet} we will need $L(P)=Q+PNP$ to be an $L^1$ detector of $\L_{A-NP}$ for every $P\in\B_s^+(X,X^*)$. Simple $L^2$ detectability of $(C,A)$ is not enough to ensure  detectability globally. One can show however that exponential detectability of $(C,A)$ is enough. A better alternative would be to prove that $L^2$ detectability is preserved by the Newton's iteration, but this seems unlikely to hold in general. 
\begin{lemma}\label{ExpDetL2} Let $A$ be the generator of a positive $C_0$ semigroup $T(t)$ on $X$, $B\in\B(U,X)$ and $C\in\B(X,V)$ for some Hilbert spaces $U,V$. Suppose that $(C,A)$ is exponentially detectable. Then for any $K\in\B(X,U)$ the operator 
$C^*C+K^*K$ is an $L^2$ detector for $A-BK$.
\end{lemma}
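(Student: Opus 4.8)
First I would unwind the definition. The operator $A-BK$ generates a $C_0$ semigroup $S(t)$, since $BK\in\B(X)$ is a bounded perturbation of the generator $A$. With $R:=C^*C+K^*K$ and $U,V$ Hilbert, the hypothesis in the definition of an $L^2$ detector (cf. Definition \ref{detL2}) becomes
\begin{equation*}
\int_0^\infty\langle RS(t)x,S(t)x\rangle\,dt=\int_0^\infty\bigl(\|CS(t)x\|_V^2+\|KS(t)x\|_U^2\bigr)\,dt<\infty,
\end{equation*}
so that both $CS(\cdot)x$ and $KS(\cdot)x$ lie in $L^2(0,\infty)$. The goal is then to deduce $S(\cdot)x\in L^2(0,\infty)$ for each $x\in X$.

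The key idea is to compare $S(t)$ against the exponentially stable semigroup furnished by detectability. By exponential detectability of $(C,A)$ I pick $L\in\B(V,X)$ so that $A-LC$ generates a semigroup $T_L(t)$ with $\|T_L(t)\|\leq Me^{-\omega t}$ for some $M,\omega>0$. Writing $A-BK=(A-LC)+(LC-BK)$, a bounded perturbation, the variation of constants (Duhamel) formula gives
\begin{equation*}
S(t)x=T_L(t)x+\int_0^t T_L(t-s)\,L\,CS(s)x\,ds-\int_0^t T_L(t-s)\,B\,KS(s)x\,ds.
\end{equation*}

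I would then check that each of the three terms lies in $L^2(0,\infty)$. The free term is controlled by exponential decay, $\int_0^\infty\|T_L(t)x\|^2\,dt\leq M^2\|x\|^2/(2\omega)$. For the integral terms, estimating the integrands in norm factors out the kernel $k(u):=Me^{-\omega u}\in L^1(0,\infty)$, yielding the pointwise bounds $\|L\|\,(k*g_1)(t)$ and $\|B\|\,(k*g_2)(t)$, where $g_1:=\|CS(\cdot)x\|_V$ and $g_2:=\|KS(\cdot)x\|_U$ are the $L^2$ functions produced above. Young's convolution inequality $\|k*g_i\|_{L^2}\leq\|k\|_{L^1}\|g_i\|_{L^2}$ then places both integral terms in $L^2$, and summing the three bounds gives $\int_0^\infty\|S(t)x\|^2\,dt<\infty$, which is the desired implication.

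I do not expect a genuine obstacle here: the whole argument turns on introducing the exponentially stable comparison semigroup $T_L(t)$ supplied by detectability and on reading the Duhamel formula as a convolution against an $L^1$ kernel, after which Young's inequality does all the work; positivity of $T(t)$ plays no role in this particular argument. The lemma is meant to be applied with $K=B^*P$, where it yields that $L(P)=Q+PNP$ is an $L^2$ detector for $A-NP=A-BB^*P$, which is exactly the global $L^1$ detectability of $(\L_A,\Phi)$ required by Definition \ref{StabDet}.
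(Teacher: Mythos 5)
Your proof is correct and coincides with the argument the paper intends: the paper omits the proof of this lemma, deferring to the Hilbert-space case \cite[Lem.~3]{Zb0} (see also \cite[Thm.~I.1.2.6]{Ben}), and that standard argument is precisely your decomposition $A-BK=(A-LC)+(LC-BK)$ with $A-LC$ exponentially stable, followed by the variation-of-constants formula and Young's convolution inequality. Your side remarks are also accurate: positivity of $T(t)$ plays no role, and the lemma is indeed applied with $K=B^*P$ to obtain Corollary \ref{GlobL1Det}.
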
 
The proof is analogous to the Hilbert case \cite[Lem.3]{Zb0}, see also \cite[Thm.I.1.2.6]{Ben}, and is omitted.
\begin{corollary}\label{GlobL1Det} Suppose $A$ is the generator of a $C_0$ semigroup on $X$, and $\Phi(P):=-PBB^*P+C^*C$ for $B\in\B(U,X)$, $C\in\B(X,V)$ with some Hilbert spaces $U,V$. If $(C,A)$ is exponentially detectable then $(\L_A,\Phi)$ is globally $L^1$ detectable on $\B_s^+(X,X^*)$.
\end{corollary}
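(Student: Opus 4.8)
The plan is to unwind the definition of global $L^1$ detectability and reduce it, for each fixed $P$, to the concrete $L^2$ detectability supplied by Lemma \ref{ExpDetL2}. First I would compute the derivative $\Phi'(P)R=-RNP-PNR$ of $\Phi(P)=-PNP+Q$ (with $N=BB^*$, $Q=C^*C$), so that $L(P)=\Phi(P)-\Phi'(P)P=Q+PNP$. By Definition \ref{StabDet}, global $L^1$ detectability of $(\L_A,\Phi)$ on $\B_s^+(X,X^*)$ means precisely that for every $P\in\B_s^+(X,X^*)$ the operator $L(P)=Q+PNP$ is an $L^1$ detector for the semigroup $\T_{F'(P)}(t)$ generated by $F'(P)=\L_A+\Phi'(P)$ on some stability subset $\F$.

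The crucial step is to recognize $F'(P)$ as a Lyapunov generator, namely $F'(P)=\L_{A-NP}$. Since $\Phi'(P)$ is bounded and $-NP=-BB^*P\in\B(X)$, Corollary \ref{LyapAdd} gives $\L_{A-NP}=\L_A+\L_{-NP}$ with $\L_{-NP}R=-(NP)^*R-RNP$. Comparing with $\Phi'(P)R=-RNP-PNR$, I would verify the operator identity $(NP)^*R=PNR$ on $X$; testing against $y\in X$ this reduces to $\langle Rx,NPy\rangle=\langle Py,NRx\rangle$, which follows by combining the symmetry of $P$ with that of $N$ and carefully tracking the pairings between $X$, $X^*$ and $X^{**}$. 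Hence $F'(P)=\L_{A-BB^*P}$, so $\T_{F'(P)}(t)$ is exactly the Lyapunov semigroup of the closed-loop generator $A-BK$ with $K:=B^*P\in\B(X,U)$, and in particular it is a positive $C_0^*$ semigroup.

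With this identification the remainder is translation. By Lemma \ref{StabSub} the tensor squares $\F=\{x\otimes x\mid x\in X\}$ form a stability subset for the class of Lyapunov semigroups, so I take this $\F$. By the equivalence recorded in \eqref{RdetL2}, $L(P)$ is an $L^1$ detector for the Lyapunov semigroup of $A-BK$ on $\F$ if and only if $L(P)$ is an $L^2$ detector for $A-BK$ in the sense of Definition \ref{detL2}. Finally I would factor $L(P)=Q+PNP=C^*C+(B^*P)^*(B^*P)=C^*C+K^*K$, and apply Lemma \ref{ExpDetL2} with this $K$: since $(C,A)$ is exponentially detectable, $C^*C+K^*K$ is an $L^2$ detector for $A-BK$, which is the required conclusion for every $P$.

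I expect the identification $F'(P)=\L_{A-BB^*P}$ to be the one genuinely computational step and the main (if modest) obstacle, because it is where the symmetry of $P$ and of $N$ must be invoked and where the adjoints have to be chased consistently across $X$, $X^*$ and $X^{**}$; once it is in place, the reduction to Lemma \ref{ExpDetL2} through Lemma \ref{StabSub} and \eqref{RdetL2} is routine bookkeeping.
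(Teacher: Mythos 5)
Your proposal is correct and follows essentially the same route as the paper: unwind Definition \ref{StabDet}, identify $\L_A+\Phi'(P)=\L_{A-BB^*P}$ (which the paper records as \eqref{ShiftLyap}, proved via Corollary \ref{LyapAdd} and the same adjoint/symmetry chase $(NP)^*=PN$ you sketch), set $K:=B^*P$ so that $L(P)=C^*C+K^*K$, and invoke Lemma \ref{ExpDetL2}. The only difference is expository: you make explicit the passage between $L^1$ detection on the tensor-square stability subset of Lemma \ref{StabSub} and $L^2$ detectability via \eqref{RdetL2}, which the paper absorbs into its convention of calling such operators $L^2$ detectors.
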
 
\begin{proof} By Definition \ref{StabDet} we need to show that for every $P\in\B_s^+(X,X^*)$ the element $L(P)=\Phi(P)-\Phi'(P)P=PBB^*P+C^*C$ is an $L^1$ detector for the Lyapunov semigroup generated by $\L_A+\Phi'(P)=\L_{A-BB^*P}$, see \eqref{ShiftLyap} and Corollary \ref{LyapAdd}. Setting $K:=B^*P\in\B(X,U)$ and noting that $K^*K=PBB^*P$ is symmetric we see that this is exactly the conclusion of Lemma \ref{ExpDetL2}.
\end{proof}
Setting $K=0$ in Lemma \ref{ExpDetL2} one can also see that exponential detectability implies detectability in $L^2$. Specializing Theorem \ref{Lyappi} to our current setting we prove Theorem \ref{Wonh} from Section \ref{S4} next. It is a generalization of the stability criterion in terms of Lyapunov equations proved for reflexive spaces in \cite{K2}, the matrix case is due to Wonham \cite[12.4]{Wh}. In fact, we prove a stronger claim that only asumes that $(C,A)$ is detectable in $L^2$.
\begin{proof}[Proof of {\rm\bf Theorem \ref{Wonh}}] By inspection and Lemma \ref{StabSub}, $\X=\B_s(X,X^*)$, $\T(t)$, $z=C^*C$ and $\F=(X\ohat X)^+$ satisfy the conditions of Theorem \ref{Lyappi}. We see that Theorem \ref{Lyappi}(i) is equivalent to the (i) here. Claims (ii),(iii) of Theorems \ref{Lyappi} are equivalent to (ii),(iii) here as well since the exponential stability of $\T(t)$ is equivalent to that of $T(t)$.
\end{proof}

We now turn to stabilizability. The map $\Phi(P)=-PNP+Q$ is clearly Gateaux differentiable with $\Phi'(P)R=-PNR-RNP$, and the auxiliary maps from \eqref{F'LPsi} become
\begin{align}\label{LPsi}
L(P)&=\Phi(P)-\Phi'(P)P=PNP+Q\\ \Psi(P,Q)&:=\Phi(P)-\Phi(R)+\Phi'(P)(R-P)=(P-R)N(P-R).\notag
\end{align}
The last formula implies in particular that $\Phi$ is order concave when $N\geq0$ since $\Psi(P,Q)\geq0$ is equivalent to the definition of order concavity for Gateaux differentiable maps. For the full Riccati map $F(P)=\L_AP+\Phi(P)$ the formal derivative is equal to 
$$
F'(P)R=\L_AR+\Phi'(P)R=A^*R+RA-PNR-RNP=(A^*-PN)R+R(A-NP).
$$
Although $P^*\neq P$ in non-reflexive spaces we still have $(B^*P)^*=PB$ because the domain of $B$ is a Hilbert space, so $B^{**}=B$, and $\Ran(B)\subset X$ while $P^*\big|_{X}=P$. Therefore also $(NP)^*=(BB^*P)^*=(B^*P)^*B^*=PBB^*=PN$, and using Corollary \ref{PNR} we conclude that 
\begin{equation}\label{ShiftLyap}
F'(P)R=(A-NP)^*R+R(A-NP)=\L_{A-NP}R\,.
\end{equation}
This means that $(\L_A,\Phi)$ is exponentially stabilizable in the sense of Definition \ref{StabDet} if and only if there is $P\in\B_s^+(X,X^*)$ such that $A-NP$ is exponentially stable. For $N=BB^*$ we will reduce this to existence of $K\in\B(X,X^*)$ such that $A-BK$ is exponentially stable, which is the standard definition of a pair $(A,B)$ being exponentially stabilizable \cite[2.4]{Wh}, \cite[I.2.5]{Zb}. 
\begin{lemma}\label{ExpStabIff} Let $A$ be the generator of a positive $C_0$ semigroup on $X$, $B\in\B(U,X)$ and $C\in\B(X,V)$ for some Hilbert spaces $U,V$. Suppose that $(C,A)$ is exponentially detectable and set $\Phi(P):=-PBB^*P+C^*C$. Then $(\L_A,\Phi)$ is exponentially stabilizable in $\B_s^+(X,X^*)$ if and only if $(A,B)$ is exponentially stabilizable.
\end{lemma}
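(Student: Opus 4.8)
The plan is to prove the two implications separately, using \eqref{ShiftLyap} to translate the abstract condition. Since $F'(P)=\L_{A-BB^*P}$ and the Lyapunov semigroup of an operator is exponentially stable exactly when that operator is, "$(\L_A,\Phi)$ is exponentially stabilizable in $\B_s^+(X,X^*)$" is equivalent to the concrete statement that $A-BB^*P$ is exponentially stable for some $P\in\B_s^+(X,X^*)$. The reverse implication is then immediate: if $P$ stabilizes $(\L_A,\Phi)$, set $K:=B^*P\in\B(X,U)$, and $A-BK=A-BB^*P$ is exponentially stable, so $(A,B)$ is exponentially stabilizable.

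For the forward (nontrivial) direction I would begin with a $K\in\B(X,U)$ for which $A-BK$ is exponentially stable and manufacture the stabilizing $P_0$ by solving a Lyapunov equation. Since $A-BK$ is exponentially stable, Theorem \ref{Lyappi} ((ii)$\Rightarrow$(iii)) gives that $\L_{A-BK}$ has a positive inverse, so
$$P_0:=-\L_{A-BK}^{-1}(C^*C+K^*K)\in\B_s^+(X,X^*)$$
is well defined and positive; moreover $P_0\in\D_{\L_{A-BK}}=\D_{\L_A}$ by Corollary \ref{LyapAdd}. By construction $P_0$ solves $(A-BK)^*P_0+P_0(A-BK)=-C^*C-K^*K$.

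The core of the argument is a completion-of-squares computation. Writing $A-BB^*P_0=(A-BK)+B(K-B^*P_0)$ and substituting the Lyapunov equation for $P_0$, I expect to arrive at
$$(A-BB^*P_0)^*P_0+P_0(A-BB^*P_0)=-(K-B^*P_0)^*(K-B^*P_0)-C^*C-P_0BB^*P_0\leq0,$$
using $(B^*P_0)^*=P_0B$ (valid as noted before \eqref{ShiftLyap}) to align adjoints in the non-reflexive setting. Call the nonpositive right-hand side $-Z$. Since $Z\geq C^*C+(B^*P_0)^*(B^*P_0)$, and Lemma \ref{ExpDetL2} applied with $K$ replaced by $B^*P_0$ shows $C^*C+(B^*P_0)^*(B^*P_0)$ is an $L^2$ detector for $A-BB^*P_0$, the domination property of detectors (the remark after Definition \ref{L1Det}) makes $Z$ an $L^2$ detector for $A-BB^*P_0$ too. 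Now $\X=\B_s(X,X^*)$ with the Lyapunov semigroup of $A-BB^*P_0$, the detector $z=Z$, and the stability subset $\F=\{x\otimes x\mid x\in X\}$ from Lemma \ref{StabSub} satisfy the hypotheses of Theorem \ref{Lyappi}, and condition (i) holds because $P_0\in\B_s^+(X,X^*)\cap\D_{\L_{A-BB^*P_0}}$ solves $\L_{A-BB^*P_0}P_0=-Z$. Hence $A-BB^*P_0$ is exponentially stable, so $F'(P_0)=\L_{A-BB^*P_0}$ generates an exponentially stable positive $C_0^*$ semigroup and $P_0$ stabilizes $(\L_A,\Phi)$.

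The main obstacle is concentrated in the forward direction and is twofold. First, the completion-of-squares identity must be handled carefully because $P_0^*\neq P_0$ in a non-reflexive $X$; what rescues the calculation is precisely the presence of $B^*$, so that $(B^*P_0)^*=P_0B$ still holds and all cross terms assemble into a perfect square. Second, and more conceptually, detectability is assumed for $(C,A)$, whereas the Lyapunov equation we ultimately solve involves the shifted generator $A-BB^*P_0$; transferring the detector property across this perturbation is exactly the role of Lemma \ref{ExpDetL2}, and recognizing that the extra positive terms appearing in $Z$ can only help is the observation that closes the argument.
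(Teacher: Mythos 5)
Your proposal is correct and matches the paper's own proof essentially step for step: the trivial direction via $K:=B^*P$, then solving $\L_{A-BK}P_0=-(C^*C+K^*K)$ by Theorem \ref{Lyappi}, the same completion-of-squares identity yielding $\L_{A-BB^*P_0}P_0=-\bigl(C^*C+P_0BB^*P_0+(K-B^*P_0)^*(K-B^*P_0)\bigr)$, and the same transfer of the detector property (the paper cites Corollary \ref{GlobL1Det}, which is just Lemma \ref{ExpDetL2} with $K$ replaced by $B^*P_0$, exactly as you do) before invoking Theorem \ref{Lyappi} again to conclude stability of $A-BB^*P_0$.
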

\begin{proof} In this proof the lower indices under both semigroup symbols $T(t)$ and $\T(t)$ indicate the corresponding generator of $T(t)$. Since $\L_A+\Phi'(P)=\L_{A-BB^*P}$ one direction is trivial. If $\T_{A-BB^*P}(t)$ is exponentially stable then so is $T_{A-BB^*P}(t)$ and one can set $K:=B^*P$.

Conversely, let $T_{A-BK}(t)$, and hence $\T_{A-BK}(t)$, be stable for some $K\in\B(X,U)$. Consider the Lyapunov equation
$\L_{A-BK}P=-(C^*C+K^*K)$. The cone $(X\ohat_\pi X)^+$ is generating and by Lemma \ref{ExpDetL2} $C^*C+K^*K$ is an $L^1$ detector for $\T_{A-BK}(t)$, so by Theorem \ref{Lyappi} there exists a solution $P\in\B_s^+(X,X^*)$. By Corollary \ref{PNR} we have 
\begin{multline}
\L_{A-BB^*P}P=\L_{A-BK}P+\L_{BK-BB^*P}P\\
=-(C^*C+K^*K)+(BK-BB^*P)^*P+P(BK-BB^*P)\\
=-C^*C-K^*K+K^*B^*P-PBB^*P+PBK-PBB^*P\\
=-\Big(C^*C+PBB^*P+(K-B^*P)^*(K-B^*P)\Big).
\end{multline}
This is another Lyapunov equation that $P$ solves. By Corollary \ref{GlobL1Det}, $C^*C+PBB^*P$ is an $L^1$ detector for $\L_{A-BB^*P}$, all the more so after adding $(K-B^*P)^*(K-B^*P)\geq0$. Existence of a solution $P\geq0$ guarantees by Theorem \ref{Lyappi} that $\T_{A-BB^*P}(t)$ is exponentially stable, so this $P$ provides the required stabilization of $(\L_A,\Phi)$.
\end{proof} 
We are ready to prove the main theorem.
\begin{proof}[Proof of {\rm\bf Theorem \ref{WonhBan}}] Let $\L_A$ be the Lyapunov generator corresponding to $A$, i.e. $\L_AP=A^*P+PA$ on $\D_A$, and let $\Phi(P):=-PBB^*P+C^*C$. By Corollary \ref{GlobL1Det} and Lemma \ref{ExpStabIff} the pair $(\L_A, \Phi)$ is exponentially stabilizable and globally $L^1$ detectable. The choice of $P_0$ is justified by Lemma \ref{ExpStabIff}. Obviously, $\Phi(0)\geq0$, and $P\mapsto\Phi(P)$ and $(P,R)\mapsto\Phi'(P)R=-PBB^*R-RBB^*P$ are continuous on bounded monotone sequences by Corollary \ref{PNR}. Therefore, by Theorem \ref{NewtRicc} there is a unique positive definite solution $P$ to the Riccati equation, which is stabilizing and a weak* monotone limit of solutions $P_n$ to \eqref{LyapIter}. Since $P_n$ is monotone decreasing and bounded by Theorem \ref{Ls+}(iii) it also strong operator converges to the same limit. It remains to prove the quadratic convergence estimate.

Let $F(P):=A^*P+PA-PBB^*P+C^*C$ on $\D_A$. By Lemma \ref{Iter}(ii) we have $F(P_{n+1})=-\Psi(P_n,P_{n+1})$, where $\Psi$ is as in \eqref{F'LPsi} with $N=BB^*$. Therefore, by concavity and \eqref{F'LPsi}
$$
F'(P)(P_{n+1}-P)\geq F(P_{n+1})-F(P)=-\Psi(P_n,P_{n+1}).
$$
Since $F'(P)=\L_{A-BB^*P}$ and $P$ is stabilizing the semigroup generated by $\L_{A-BB^*P}$ is exponentially stable, and the inverse $-F'(P)^{-1}$ is bounded and positive by Theorem \ref{Lyappi}(iii). Therefore, 
$0\leq P_{n+1}-P\leq-F'(P)^{-1}\Psi(P_n,P_{n+1})$. Since the norm on $\B_s(X,X^*)$ is monotone by Theorem \ref{Ls+}(i) we estimate
$$
\|P_{n+1}-P\|\leq\|F'(P)^{-1}\|\|\Psi(P_n,P_{n+1})\|.
$$
It follows from \eqref{F'LPsi}, the inequalities $0\leq P_n-P_{n+1}\leq P_n-P$, and the monotonicity of the norm that
$$
\|\Psi(P_n,P_{n+1})\|=\|(P_n-P_{n+1})BB^*(P_n-P_{n+1})\|\leq\|B\|^2\|P_n-P_{n+1}\|^2\leq\|B\|^2\|P_n-P\|^2.
$$
Setting $\kappa:=\|(\L_{A-BB^*P})^{-1}\|\|B\|^2$ we obtain the estimate.
\end{proof}

\end{document}